\documentclass[reqno]{amsart}
%%%leqno(pr numerotaion a gauche)
% %reqno(pr numerotaion a droite)
%\usepackage{calrsfs}
\usepackage{color}
%%%
\usepackage{hyperref}
\usepackage{graphicx}
\usepackage[all]{xy}
\newtheorem{thm}{Theorem}[section]
 
 \newtheorem{lem}[thm]{Lemma}
 \newtheorem{prop}[thm]{Proposition}

\theoremstyle{definition}
 \newtheorem{defn}[thm]{Definition}
 \theoremstyle{remark}
 \newtheorem{rem}[thm]{Remark}
  \theoremstyle{example}
  \newtheorem{exm}[thm]{Example}

 \newcommand{\Real}{\mathbb{R}}
 \newcommand{\N}{\mathbb{N}}

% \numberwithin{equation}{section}

\begin{document}
\renewcommand{\thefootnote}{\arabic{footnote}}
%	\setcounter{footnote}{0}

%\title[Generalization  of $L^p-$SPACES FOR $0 < p < 1$  ]
%{Generalization  of $L^p-$SPACES FOR $0 < p < 1$\\ }

\title[\it{Certain properties of Generalization  of $L^p-$SPACES FOR $0 < p < 1$ } ]{}
\begin{center}{\large{CERTAIN PROPERTIES OF GENERALIZATION OF $L^p-$SPACES FOR $0 < p < 1$}}
             % Without $\Delta _2$-condition}}
\end{center}

\author[ R. E\MakeLowercase{larabi}  M. E\MakeLowercase{l-arabi}   \MakeLowercase{and} M. R\MakeLowercase{houdaf} ]{R\MakeLowercase{abab}
  E\MakeLowercase{larabi}$^{  1   * },$ \ M\MakeLowercase{ouhssine}
 E\MakeLowercase{l-}A\MakeLowercase{rabi}$^2$ \    \MakeLowercase{and}  \ M\MakeLowercase{ohamed} R\MakeLowercase{houdaf}$^3$ 
}
\maketitle
\vspace*{-5mm}\begin{center}\footnotesize

$^{1,3}$Laboratoire de Math\'ematiques et leurs Applications, \'equipe:
EDP et Calcul Scientifique, Universit\'e Moulay Ismail
Facult\'e des Sciences, Mekn\`es, Morocco \\ [5mm]
$^{2}$Laboratoire
(MSISI),
 \'equipe:  Analyse fonctionnelle, Th\'eorie spectrale, Th\'eorie des codes et Applications, Universit\'e Moulay Ismail
Facult\'e des Sciences et T\'echniques, Errachidia, Morocco \\ [5mm]
\end{center}
\vspace*{-5mm}\begin{center}\footnotesize

$^{1}$\texttt{ rababelarabi162@gmail.com}\\
$^{2}$\,\,\,\texttt {elarabimouh@gmail.com}\,\,\,\,\,\,\,\,\,\\
$^{3}$\texttt{ rhoudafmohamed@gmail.com}\\
$^*$ Corresponding author.\\
\end{center}
%\author{  R. El arabi\footnote {Corresponding author.}

\begin{abstract}
		This paper introduces the notion of $N^*-$function and gives a generalization of $L^p,$ for  $0<p<1$ denoted by $L_\Phi$ where $\Phi$ is an $N^*-$function. As well as, this paper examines some properties regarding to this generalized spaces and its linear forms, including some analogies and common features  to some other well known spaces.  As well as, we prove this space is a
			 quasi-normed space but it is not normed space. 
\end{abstract}
\section{\textbf{Introduction}}
Function spaces, in particular $L^p(X)$ spaces, play a central role in many questions in analysis. It is a space of measurable functions $f$ on $X,$ for $p<\infty$ have absolute values $p-$th power integrable with respect to Lebesgue measure for which 
 $\int_X |f|^p<\infty.$
For $p\geq 1,$ $L^p(X)$ is a Banach space with  the following norm:
 $ \|f\|_{L^p}= \big(\int_{X}|f|^p dx\big)^{1/p}.$
  Observe that since the triangle inequality $\|f_1+f_2\|_{L^p}\leq\|f_1\|_{L^p}+\|f_2\|_{L^p}$ fails in general when $0 <p< 1,$ $\|.\|_{L^p}$ is not a norm on $L^p$ for this range of $p,$ hence it is not a Banach space.
%  However, when $0<p<1,$ the function  $\|f\|_{L^p}$ no longer satisfies the triangle inequality
% $\|f_1+f_2\|_{L^p}\leq\|f_1\|_{L^p}+\|f_2\|_{L^p}.$
  This suggests that while many theorems on Banach spaces which  can be applied to the spaces $L^p$ with $p\geq 1$  may fail to hold in those spaces with $0<p<1.$ 
 
 In this work we introduce a new notion of functions called $N^*-$function and  generalization  of $ L^p$ spaces for $0<p<1$ denoted by $L_\Phi$ where $\Phi $  is an $N^*-$function.   We  concentrate on the basic structural facts about  $ L_\Phi,$ we present the linear form of $L_\Phi$ and the dual space is developed. 
 The most  theorem about Banach spaces is the Hahn-Banach theorem, which
  links the original Banach space with its dual space and  has no obvious
 extension because the dual space is zero and $L^p$ with $0<p<1$ as a  model.
   %The model for a topological vector space with
  % zero dual space will be $L^p-$space when $0 < p < 1.$ 
  We'll see how to make the proof work for our spaces $L_\Phi.$ \\
  
  %Finally we present the linear form of $L_\Phi$ and the dual space is developed.\\

    %Here part of the theory, in particular the study of their linear functionals, is best formulated in the more general context of Banach spaces. An incidental benefit of this more abstract view-point is that it leads us to the surprising discovery of a finitely additive measure on all subsets, consistent with Lebesgue measure.
The paper is organized as follows: In order not to disturb our discussions later on
we use Section 2 to present some preliminaries, including some concavity results. In Section 3, we introduce the notion of $N^*-$function and its properties 
 while the discussion of the  $L_\Phi$ spaces   presented in Section 4 and 5 . The   $L_\Phi$ spaces with dual space zero is
  developed in Section 6.
%\newpage
	
\section{\textbf{Preliminaries }}
	
In this section,
% to provide some context,
 we recall some basic tools that are  important in  our main results.\\
 Throughout this paper our vector spaces are real vector spaces.	

\begin{defn}
%Let $X$ be a set. A metric on $X$ is an assignment of a distance $d(x, y) \in \mathbb{R}$ to every pair of "points" $x, y$ in $X$
%(that is $d: X \times X\mapsto \mathbb{R}$) satisfying the following:

Let $X$ be a vector space. A norm on $X$ is an assignment of a norm $\|x\| \in \mathbb{R}$ to every  "point" $x$ in $X$
(that is $\|\|: X \mapsto \mathbb{R}$) satisfying the following:
\begin{itemize}
%\item (Positivity) For all $x, y \in X,$ \ $ d(x, y) \geq 0$ and $d(x, y) = 0$ if and only if $x = y,$
\item (Separation) For all $x \in X,$ \ $ \|x\| \geq 0$ and $\|x\| = 0$ if and only if $x =0$ 
%\item (Symmetry) For all $x, y \in  X,$ \  $d(x, y) = d(y, x),$
\item (The triangle inequality) For all $x,y \in  X,$ 
$\|x + y\| \leq\| x\| + \|y\|$
\item (Homogeneity) For all $x, y \in  X,$
% \ $d(x, z)\leq d(x, y) + d(y, z).$
$\|\alpha x\| = |\alpha|\|x\|.$
 %for all scalars c and v 2 V .
\end{itemize}
%A metric space is a set $X$ together with such a metric.
Given a norm on a vector space, we get a metric by $d(x,y) = \|x -y\|.$
\end{defn}

%\begin{defn}
%Let $(X, d)$ be a metric space. A sequence $(x_n)$ of points of $X$ is a
%Cauchy sequence on $(X, d)$ if for all $\varepsilon > 0$ there is $N\in  \mathbb{N}$ such that if $m, n \geq N$
%then $d(x_n, x_m) < \varepsilon.$ The metric space $(X, d)$ is complete if every Cauchy sequence
%in $X$ converges.
%
%\end{defn}
\begin{defn}
A Banach space is a complete  vector space $X$ equipped with a norm $\|.\|,$
 defined by a  metric  $d(x,y) = \|x-y\|.$  
\end{defn}

%\begin{exm}
%%The spaces $L^p(X)$ for $p\geq 1$   are Banach spaces with the following norm:
%%$$\|f\|=: \big(\int_{X}|f|^p dx\big)^{1/p}$$
%
%The family of $L^p$ spaces with $1 \leq p \leq \infty$
% %which we have just introduced 
% are  important examples of Banach spaces, with the norm $\|.\|_{L^p}.$
% % $$\|f\|_{L^p}=: \big(\int_{X}|f|^p dx\big)^{1/p}$$
% 
% Observe that since the triangle inequality fails in general when $0 <p< 1,$ $\|.\|_{L^p}$ is not a norm on $L^p$ for this range of $p,$ hence it is not a Banach space.
%\end{exm}
\begin{defn}
A topological vector space is called locally convex if the convex open sets
are a base for the topology: given any open set $U$ around a point, there is a convex open
set $C$ containing that point such that $C \subset U.$
\end{defn}
\begin{exm}
Any Banach space is locally convex, since all open balls are convex. This follows from the definition of a norm.\\
The spaces $L^p$ for $p\geq 1$ are Banach spaces so, they are locally
convex, but in general it is not locally convex as an examples 2.19 and 2.20 in \cite{K} that, for  $0 < p < 1,$ $L^p[0, 1]$ is not locally convex.

%For a measure space $(X,\mathcal{M}, \mu)$ and $0 < p < 1,$ is $L^p(\mu)$ ever locally convex?
%The affirmation answer is when $\mu$ assumes finitely many values see Theorem 3.2 in \cite{K}.
\end{exm}
\begin{defn}
A set $A\in \mathcal{M}$ is called atom (in symbols $A\in \Lambda$) if, $\mu(A) > 0$ and for any measurable subset $B$ of $A$ with $\mu(B)<\mu(A)$ we have $\mu(B) = 0.$\\
A measure is called non-atomic if for any measurable set $A$ with $\mu(A) > 0,$ there is a measurable subset $B$ of $A$ such that
$$0 < \mu(B) < \mu(A) .$$
That is, $A$ is not an atom.\\
A measure space $(X,\mathcal{M}, \mu )$ is called an atomic if there is a countable partition $\{A_i\}$ of $X$ formed by atoms.\\
A set $A\in \mathcal{M}$ is called non-atomic (or atomless)  if neither $A$ nor any of it measurable subsets is the atom. A measure space $(X,\mathcal{M}, \mu )$ is called a non-atomic ( or atomless) if for any measurable set $A$ with $\mu(A)>0$ is an atomless. Hence $(X,\mathcal{M}, \mu )$ is non-atomic if and only if $\Lambda=\emptyset.$

There are several obvious properties of the sets of $\Lambda.$ 

\begin{enumerate}
\item If $A$ has only one element and $\mu(A)>0,$ then $A\in \Lambda.$
\item If $A\in \Lambda$ and $f$ is function measurable, then $f$ is equal to a constant $\overline{f}$ almost everywhere on $A.$
\item If $A_1\in \Lambda$ and $A_2\in \mathcal{M},$ either $\mu(A_1\cap A_2)=0$ or $\mu(A_1\cap A_2)=\mu(A_1).$ 
\item $\Lambda=\emptyset$ if and only if for every $A\in \mathcal{M},$ with $\mu(A)>0,$ there exists a sequence of measurable sets $\{B_n  \}_{n\geq 1}$ such that $B_{n+1}\subset B_n\subset A,$ $\mu(B_n)>0$ and $\lim\limits_{n\rightarrow +\infty}\mu(B_n)=0.$
\item If $A\in \Lambda  $ and $\mu(A')=\mu(A")=0,$  then $A_0=A+A'-A"\in \Lambda.$\\ 
\end{enumerate}

\end{defn}

%The answer is in the following Theorem:
%\begin{thm}(cf. \cite{K})
%For $0 < p < 1,$ $L^p(\mu)$ is locally convex if and only if the measure $\mu$ assumes
%finitely many values. 
%\end{thm}
Let's recall some definitions and properties  which will be used in the sequel of this paper.  

	\begin{defn}[\textbf{$N-$function\footnote {{\it }For more details about N-functions see  \cite{Kra}.}}]
	Let $M\colon\Real\to  \Real^{+}$ be an $N$-function, i.e., $M $
	is a convex function, with $M(t)>0$ for $t\neq0,$   
	  $$\frac{M(t)}{t}\to 0
	\ \mbox{as} \ t\to 0$$
	 and $$\frac{M(t)}{t}\to  \infty \ \mbox{ as} \ t\to  \infty .$$
	\end{defn}
	Equivalently, $M$ admits the following representation: $$M(t)=\int_0^{|t|}
	m(s)\,{\rm d}s $$ where $m\colon \Real^{+}\to \Real^{+}$ is a non-decreasing
	and right continuous function, with $m(0)=0$, $m(t)>0$ for $t>0$, and $m(t)\to
	\infty $ as $t\to \infty$.\\
	 The $N$-function $\bar{M}$ conjugate
	to $M$ is defined by $$\bar{M}(t)=\int_0^{|t|}\bar{m}(s)\,{\rm d}s ,$$
	where $\bar{m}\ :\ \Real^{+}\to  \Real^{+}$ is given by
	$\bar{m}(t)=\sup \{s\,/\, m(s)\leq t\}$.
	
	\begin{defn}[\textbf{Orlicz spaces}](see \cite{A,Kra})
	 Let $X$ be an open subset of $\Real^d$, $d\in\N$. The Orlicz class $
	{\mathcal{L}_M}(X)$ (resp.~the Orlicz space $L_M(X)$) is
	defined as the set of (equivalence classes of) real-valued
	measurable functions $u$ on $X $ such that:
	\begin{equation}\label{10}
	\displaystyle \int_X M(u(x))\,{\rm d}x<+\infty \quad {\mbox{(resp.\ }}
	\int_X M\Big(\frac{u(x)}\lambda \Big)\,{\rm d}x<+\infty {\mbox{ for some
	$\lambda>0$}}).
	\end{equation}
	\end{defn}
	Notice that $L_M(X)$ is a Banach space under the so-called Luxemburg
	 norm, namely
	\begin{equation}\label{11}
	\| u\| _{M}=\inf \Big\{ \lambda >0\,/\, \int_X
	M\Big(\frac{u(x)}{\lambda}\Big)\,{\rm d}x\leq 1\Big\}
	\end{equation}
	and ${\mathcal{L}_M}(X)$ is a convex subset of $L_M(X)$.
%	\subsection{Concave function:} 
\begin{defn}[\textbf{Concave function}]
A real-valued function $\Phi(x)$ of the real
	variable $x$ is said to be concave if the inequality
	\begin{equation}\label{concave}
	\Phi(\alpha x + ( 1 -\alpha )y)\geq \alpha \Phi(x)+(1-\alpha )\Phi(y)
	\end{equation}
is satisfied for all values of $x, y$ and  $0\leq \alpha \leq 1 .$
\end{defn}
Inequality (\ref{concave}) admits still another generalization:
 
 \begin{equation}\label{Gconcave }
 	\displaystyle\Phi\big(\frac{x_1+x_2+...+x_n}{n}\big)\geq\frac{1}{n}[  \Phi(x_1)+\Phi(x_2)+...+\Phi(x_n)]
 	\end{equation}
for arbitrary $x_l, x_2, ... , x_n.$ By successive application of (\ref{concave}). 
%\begin{lem}
%If  $x_1< x_2 < x_3,$
%then 
%\begin{equation}
%\frac{\Phi(x_2)-\Phi(x_1)}{x_2-x_1}\geq \frac{\Phi(x_3)-\Phi(x_1)}{x_3-x_1}\geq \frac{\Phi(x_3)-\Phi(x_2)}{x_3-x_2} .
%\end{equation}
%\end{lem} 
%As $\displaystyle x_2=\frac{x_3-x_2}{x_3-x_1}x_1+\frac{x_2-x_1}{x_3-x_1}x_2,$ and in virtue of inequality (\ref{concave}), we have that 
% \begin{equation*}
% \Phi(x_2)\geq \frac{x_3-x_2}{x_3-x_1}\Phi(x_1)+\frac{x_2-x_1}{x_3-x_1}\Phi(x_2),
% \end{equation*}
%it follows that 
%\begin{equation}\label{equation1}
%\frac{\Phi(x_2)-\Phi(x_1)}{x_2-x_1}\geq \frac{\Phi(x_2)-\Phi(x_1)}{x_2-x_1}\geq \frac{\Phi(x_3)-\Phi(x_2)}{x_3-x_2} .
%\end{equation}
%\subsection{Integral representation of a concave function:}

\begin{lem}\label{lem1}
A continuous concave function $\Phi(x)$ has, at every
point, a right derivative $p_+ (x)$ and a left derivative $p_- (x)$ such that
\begin{equation}\label{p}
p_-(x)\geq p_+(x).
\end{equation}
\end{lem}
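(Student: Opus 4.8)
The plan is to exploit the fundamental monotonicity property of difference quotients of a concave function. For a fixed point $x$, I would define, for $h\neq 0$, the slope function
\[
Q(h)=\frac{\Phi(x+h)-\Phi(x)}{h},
\]
and show that $Q$ is non-increasing in $h$ on $\Real\setminus\{0\}$ (with the understanding that it extends monotonically across $0$), i.e. that for $h_1<h_2$ one has $Q(h_1)\geq Q(h_2)$. This is the concave analogue of the classical fact that difference quotients of a convex function are monotone, and it follows directly from the defining inequality (\ref{concave}): given three collinear abscissae $u<v<w$, writing $v$ as the appropriate convex combination $v=\alpha u+(1-\alpha)w$ with $\alpha=\frac{w-v}{w-u}$ and applying (\ref{concave}) yields the three-slope inequality
\[
\frac{\Phi(v)-\Phi(u)}{v-u}\ \geq\ \frac{\Phi(w)-\Phi(u)}{w-u}\ \geq\ \frac{\Phi(w)-\Phi(v)}{w-v}.
\]
Specializing the collinear points to $x+h_1$, $x$, $x+h_2$ in the various possible orderings (both $h_i$ positive, both negative, or of opposite sign) gives the asserted monotonicity of $Q$ about the point $x$.

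Next I would extract the one-sided derivatives. Since $h\mapsto Q(h)$ is non-increasing, the limit $p_+(x)=\lim_{h\to 0^+}Q(h)=\sup_{h>0}Q(h)$ exists in $(-\infty,+\infty]$ as a monotone limit, and likewise $p_-(x)=\lim_{h\to 0^-}Q(h)=\inf_{h<0}Q(h)$ exists in $[-\infty,+\infty)$. To see that both are actually finite real numbers I would invoke continuity of $\Phi$ together with the three-slope inequality: picking any fixed negative increment $h_0<0$ and any fixed positive increment $h_1>0$, the monotonicity gives $Q(h_0)\geq Q(h)\geq Q(h_1)$ for all small $h$ of either sign, so both one-sided limits are sandwiched between the two finite numbers $Q(h_1)$ and $Q(h_0)$. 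Finally, the inequality $p_-(x)\geq p_+(x)$ claimed in (\ref{p}) is immediate from the same monotonicity: every difference quotient from the left dominates every difference quotient from the right (this is exactly the middle-vs-right comparison in the three-slope inequality applied to points $x+h_-<x<x+h_+$), and passing to the limits $h_-\to 0^-$, $h_+\to 0^+$ preserves the inequality.

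The only mildly delicate point — and the one I would treat most carefully — is the bookkeeping of cases when converting the abstract "collinear points" version of the three-slope inequality into statements about $Q(h)$ for $h$ ranging over both signs, since the sign of $h$ flips the direction of the division and one must be sure the monotonicity statement "$Q$ is non-increasing across $0$" is correctly oriented; concretely, one needs $Q(h)$ for $h<0$ to lie above $Q(h)$ for $h>0$, which is what ultimately produces $p_-(x)\geq p_+(x)$ rather than the reverse. Everything else is a routine monotone-limit argument, and continuity of $\Phi$ is used only to guarantee the difference quotients are finite (hence the limits real) rather than to establish their existence, which is purely order-theoretic.
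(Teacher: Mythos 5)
Your proof is correct, and it is essentially the argument the paper relies on: the paper gives no proof of its own but defers to Lemma 1.1 of Krasnosel'skii--Rutickii, whose proof is exactly this chord-slope (three-slope) monotonicity argument, with the inequalities reversed for concavity. Your case bookkeeping and the orientation $p_-(x)=\inf_{h<0}Q(h)\geq\sup_{h>0}Q(h)=p_+(x)$ are all in order.
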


\begin{lem}\label{1.3}
A concave function $\Phi$ is absolutely continuous
and satisfies the Lipschitz condition in every finite interval.
\end{lem}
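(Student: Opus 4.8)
The plan is to prove Lemma~\ref{1.3} by localizing to an arbitrary finite interval $[a,b]$ and exploiting the two-sided derivatives furnished by Lemma~\ref{lem1}. First I would fix $[a,b]$ and choose a slightly larger interval $[a-\delta, b+\delta]$ on which $\Phi$ is still defined; by Lemma~\ref{lem1}, the right derivative $p_+$ exists everywhere on this larger interval. The key monotonicity fact is that for a concave function the difference quotients are non-increasing: if $x<y$, then the slope of the chord from $x$ to $y$ decreases as either endpoint moves to the right, and consequently $p_+(a-\delta) \geq p_+(x) \geq p_-(y) \geq p_+(b+\delta)$ for all $x,y \in [a,b]$. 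This pins the one-sided derivatives between two finite constants, so setting $K = \max\{|p_+(a-\delta)|, |p_+(b+\delta)|\}$ we get $|\Phi(s) - \Phi(t)| \leq K|s-t|$ for all $s,t \in [a,b]$; this is the Lipschitz condition on $[a,b]$.

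The cleanest way to get the Lipschitz bound from the derivative bound is to note that a concave function lies below each of its chords and above the tangent-type lines built from one-sided derivatives: for $t < s$ in $[a,b]$,
\begin{equation}\label{lip-est}
p_+(s)(s-t) \leq \Phi(s) - \Phi(t) \leq p_+(t)(s-t),
\end{equation}
which is exactly the statement that the chord slope $\frac{\Phi(s)-\Phi(t)}{s-t}$ is squeezed between $p_+(s)$ and $p_+(t)$ — a direct consequence of concavity (take the defining inequality \eqref{concave} with the appropriate convex combination and let the parameter tend to $0$ or $1$). Combining \eqref{lip-est} with the uniform bound $|p_+| \leq K$ on $[a,b]$ yields $|\Phi(s)-\Phi(t)| \leq K|s-t|$.

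Finally, absolute continuity on every finite interval is immediate from the Lipschitz property: given $\eps>0$, take $\eta = \eps/K$; then for any finite disjoint collection of subintervals $(s_i,t_i) \subset [a,b]$ with $\sum_i (t_i - s_i) < \eta$ we have $\sum_i |\Phi(t_i) - \Phi(s_i)| \leq K \sum_i (t_i-s_i) < \eps$. Since $[a,b]$ was arbitrary, $\Phi$ is absolutely continuous on every finite interval. I expect the only delicate point to be the passage to a strictly larger interval to ensure the one-sided derivatives at the endpoints of $[a,b]$ are genuinely finite (so that $K<\infty$); if $\Phi$ is only assumed defined on a half-line or a fixed interval, one instead bounds $p_+$ on $[a,b]$ directly using $p_+(a) \geq p_+(x) \geq p_-(b) \geq p_+(b) > -\infty$, which still works because $p_+(a)$ and $p_+(b)$ are finite by Lemma~\ref{lem1}. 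The monotonicity of difference quotients is the workhorse and should be stated explicitly as the first step.
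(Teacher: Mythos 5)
Your argument is correct and is essentially the proof the paper intends: the paper itself gives no proof of Lemma~\ref{1.3}, deferring to the corresponding lemma in \cite{Kra}, and the argument there is exactly yours --- monotonicity of the difference quotients of a concave function, hence one-sided derivatives bounded on any $[a,b]$ compactly contained in the interior of the domain, hence a Lipschitz bound, hence absolute continuity. Your closing caveat is well taken: the Lipschitz constant can blow up at a boundary point of the domain (indeed an $N^*$-function has $p(0^+)=+\infty$, so it is not Lipschitz on intervals with endpoint $0$), which is why the interval must be taken interior to the domain as in your first step.
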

%\begin{proof}
%we proceed by the same way as the lemma 1.3  in \cite{Kra}.
%\end{proof}
The above Lemmas \ref{lem1} and  \ref{1.3}   are proven by the same way as that indicated respectively in the proof of   lemma 1.1 and   lemma 1.3 in \cite{Kra}. 
%\begin{proof}
%We consider any interval $[a, b].$ Let $a < x_l < x_2 < b.$
%In virtue of (\ref{equation1}), we have that
%\begin{equation*}
%\frac{\Phi(x_2)-\Phi(a)}{x_2-a}\geq \frac{\Phi(x_2)-\Phi(x_1)}{x_2-x_1}\geq \frac{\Phi(b)-\Phi(x_2)}{b-x_2} .
%\end{equation*}
%It follows from these inequalities that
%\begin{equation*}
%p_+(a)\geq \frac{\Phi(x_2)-\Phi(x_1)}{x_2-x_1}\geq p_-(b) .
%\end{equation*}
%
%
%i.e. that the quantity $\displaystyle \big|\frac{\Phi(x_2) - \Phi(x_l)}{x_2 - x_l}\big|$ is bounded
%for all $x_l, x_2$ in the interval $[a, b].$
%
%\end{proof}

\begin{thm}\label{thm}
Every concave function $\Phi$ which satisfies the
condition $\Phi(a)=0$ can be represented in the form
\begin{equation}\label{form}
\Phi(x)=\int_{a}^{x}p(t)dt,
\end{equation} 
where $p$ is a decreasing right-continuous function.
\end{thm}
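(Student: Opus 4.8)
The plan is to mirror the classical proof of the analogous representation theorem for $N$-functions (cf.\ Lemma 1.2 in \cite{Kra}), adapting it to the concave setting where the relevant monotonicity is reversed. We are given a concave function $\Phi$ with $\Phi(a)=0$, and by Lemma \ref{1.3} we know $\Phi$ is absolutely continuous on every finite interval. Hence the fundamental theorem of calculus for absolutely continuous functions applies, and $\Phi(x)=\Phi(a)+\int_a^x \Phi'(t)\,dt=\int_a^x\Phi'(t)\,dt$ for the (almost everywhere defined) derivative $\Phi'$. The task then reduces to showing that $\Phi'$ agrees almost everywhere with a decreasing right-continuous function $p$, and that replacing $\Phi'$ by $p$ does not change the value of the integral.

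The key steps, in order, are as follows. First, invoke Lemma \ref{lem1}: at every point $x$ the right derivative $p_+(x)$ and left derivative $p_-(x)$ exist and satisfy $p_-(x)\geq p_+(x)$. Second, show that $p_+$ is itself decreasing: for $x<y$, concavity forces the difference quotients to be monotone, so that $p_+(x)\geq p_-(y)\geq p_+(y)$; this is the standard fact that the slope of a chord of a concave function decreases as the chord moves to the right. Third, observe that a monotone function is right-continuous except possibly at countably many points, and that by taking $p(x):=p_+(x)$ we already have a decreasing function; I would then check it is right-continuous directly from the definition of $p_+$ as a limit of difference quotients from the right, or else redefine $p$ at its countably many jump points to enforce right-continuity. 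Fourth, note that $\Phi$ is differentiable wherever $p_-(x)=p_+(x)$, which by monotonicity of $p_+$ is all but countably many points; hence $\Phi'(x)=p(x)$ almost everywhere. Finally, combine this with the absolute continuity from Lemma \ref{1.3} to conclude $\Phi(x)=\int_a^x\Phi'(t)\,dt=\int_a^x p(t)\,dt$, since the two integrands differ only on a null set.

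The main obstacle, such as it is, is the careful bookkeeping around right-continuity: one must make sure that the decreasing function $p$ can be chosen right-continuous \emph{and} still equal to $\Phi'$ almost everywhere, so that the integral representation is unaffected. This is entirely parallel to the treatment of the density $m$ in the definition of an $N$-function given earlier in the excerpt, where $m$ is taken non-decreasing and right-continuous; here the only change is that "non-decreasing" becomes "decreasing" because $\Phi$ is concave rather than convex. Since the paper explicitly states that Lemmas \ref{lem1} and \ref{1.3} are proved exactly as their convex counterparts in \cite{Kra}, I expect the proof of Theorem \ref{thm} to likewise be a verbatim transcription of the convex argument with inequalities reversed, so no genuinely new difficulty arises.
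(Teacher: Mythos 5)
Your proposal is correct and follows essentially the same route as the paper's proof: existence and ordering of the one-sided derivatives from Lemma \ref{lem1}, monotonicity of the difference quotients from concavity, almost-everywhere differentiability via the continuity points of the monotone function, the choice $p=p_+$, and the integral representation from the absolute continuity established in Lemma \ref{1.3}. The only difference is that you are somewhat more careful than the paper about verifying (or enforcing) the right-continuity of $p_+$, a point the paper's proof passes over silently.
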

\begin{proof}
We note first of all that the function $\Phi$ has a
derivative almost everywhere. In fact ,
we have in virtue of (\ref{p}) 
%\begin{equation}\label{p}
%p_-(x)\geq p_+(x).
%\end{equation}
and 
\begin{equation}
p_+(x_1)\geq p_-(x_2).
\end{equation}
%
%
% in virtue of (\ref{p+-}) and (\ref{p}),
so, we have that
\begin{equation}\label{1.10}
p_-(x_2)\leq p_+(x_1)\leq p_-(x_1)
\end{equation}
for $x_2>x_1.$ Since the function $p_-$ is monotonic, it is continuous almost everywhere. Let $x_1$ be a point of continuity of the function
$p_-.$  Passing to the limit in (\ref{1.10}) as $x_2\rightarrow x_1,$ we obtain that
$p_-(x_1)\leq p_+(x_1) \leq p_-(x_1) ,$  i.e. $p_-(x_1) = p_+(x_1) .$\\
Similarly, we have that $\Phi'(x) = p(x) = p_+(x)$ almost everywhere.\\ 
Since the function $\Phi$ is absolutely continuous (in virtue of
Lemma \ref{1.3}), it is the indefinite integral of its derivative.

\end{proof}

\section{ \textbf{$N^*$-function.}}

A function $\Phi$ is called an
$N^*-$function if it admits of the representation
\begin{equation}\label{N*-function}
\displaystyle \Phi(x)=\int_{0}^{|x|}p(t)dt<+\infty;
\end{equation}
where the function $p(t)$ is right-continuous for $t\geq 0,$ positive
for $t > 0$ and decreasing which satisfies the conditions

\begin{equation}\label{limit_p}
 \lim\limits_{t\rightarrow 0^+}p(t)=p(0^+)=+\infty, \ \lim\limits_{t\rightarrow +\infty}p(t) =p(+\infty)=0.
\end{equation}
%Roughly speaking, the above conditions signify that the function
%$p$  must have a graph of the form shown in Fig. 1.\\
%\textbf{FIGURE}\\
For example, the functions   
$\displaystyle \Phi_1(x)=\displaystyle e^{\frac{\ln(\alpha|x| )}{\alpha}}\ (\alpha>1),\ \Phi_2(x)=\sqrt{\ln(|x|+1)}$
are $N^*-$functions. For the first of these, $\displaystyle p_1(t) = \Phi'_1(t) = \frac{1}{\alpha t}e^{\frac{\ln(\alpha t)}{\alpha}}$ and,
for the second, $\displaystyle p_2(t) = \Phi'_2(t) = \frac{1}{2(t+1)\sqrt{\ln(t+1)}}.$ 

\subsection{Properties of $N^*-$functions.}

It follows from representation
(\ref{form}) that every $N^*-$function is even, continuous, assumes the value
zero at the origin, and non-decreases for positive values of the argument.
$N^*-$functions are concave. In fact , if $0< x_1 < x_2,$ then, in virtue
of the monotonicity of the function $p,$ we have that
\begin{eqnarray*}
\Phi(\frac{x_1+x_2}{2}) =\int_{0}^{\frac{x_1+x_2}{2}}p(t)dt&=&\int_{0}^{\frac{x_1}{2}}p(t)dt+\int_{\frac{x_1}{2}}^{\frac{x_1+x_2}{2}}p(t)dt\\
&=& \int_{0}^{\frac{x_1}{2}}p(t)dt+\int_{0}^{\frac{x_2}{2}}p(t+\frac{x_1}{2})dt\\
&\geq& \frac{1}{2}\int_{0}^{\frac{x_1}{2}}p(t)dt+\frac{1}{2}\int_{0}^{\frac{x_2}{2}}p(t)dt\\
&\geq& \frac{1}{2}[\Phi(x_1)+\Phi(x_2)].
\end{eqnarray*}
In the case of arbitrary $x_1, x_2,$ we have that

\begin{eqnarray*}
\Phi(\frac{x_1+x_2}{2}) =\Phi(\frac{|x_1+x_2|}{2})&\geq& \Phi(\frac{|x_1|+|x_2|}{2})\\
&\geq& \frac{1}{2}[\Phi(x_1)+\Phi(x_2)].  
\end{eqnarray*}
Setting $x_2 = 0$ in (\ref{concave}) , we obtain that
\begin{equation}\label{1.13}
\Phi(\alpha x_1)\geq \alpha \Phi(x_1) \ \ (0\leq \alpha \leq 1).
\end{equation}
The first of conditions (\ref{limit_p}) signifies that
\begin{equation}\label{1.14}
\lim\limits_{x\rightarrow 0^+}\frac{\Phi(x)}{x}=+\infty.
\end{equation}
It follows from the second condition in (\ref{limit_p}) that
\begin{equation}\label{1.15}
\lim\limits_{x\rightarrow +\infty}\frac{\Phi(x)}{x}=0.
\end{equation}
We denote by $\Phi^{-1} \  (0\leq x< \infty)$ the inverse function to the
$N^*-$function $\Phi$ considered for non-negative values of the argument. This function is convexe since, in virtue of inequality (\ref{concave}) ,
we have that $\Phi^{-1}[\alpha x_1 + (1 - \alpha) x_2]\leq  \alpha\Phi^{-1}( x_1) + (1 - \alpha)\Phi^{-1} (x_2)$
for $x_1 , x_2\geq 0.$

The monotonicity of the right derivative $p$ of the $N^*-$function
$\Phi$ implies the inequality

\begin{eqnarray}\label{1.16}
\Phi(x+y)=\int_{0}^{|x+y|}p(t)dt&\leq &\int_{0}^{|x|+|y|}p(t)dt\nonumber\\
&=&\int_{0}^{|x|}p(t)dt+\int_{|x|}^{|x|+|y|}p(t)dt\nonumber\\
&\leq& \int_{0}^{|x|}p(t)dt+\int_{0}^{|y|}p(t+|x|)dt\nonumber\\
&\leq& \int_{0}^{|x|}p(t)dt+\int_{0}^{|y|}p(t)dt.
\end{eqnarray}

\subsection{Second definition of an $N^*-$function}

 It is sometimes expedient
to use the following definition: a continuous concave function $\Phi$ is called an $N^*-$function if it is even and satisfies conditions (\ref{1.14}) and (\ref{1.15}). We shall show that this definition is equivalent to the  fact that it follows from the second definition of an $N^*-$function that it is possible to represent it in the form (\ref{N*-function}).

In virtue of (\ref{1.14}), $\Phi(0) =0.$ Therefore, in virtue of the evenness
of the function $\Phi$ and Theorem \ref{thm}, it can be represented in
the form
\begin{equation*}
\Phi(x)=\int_{0}^{|x|}p(t)dt,
\end{equation*}
where the function $\Phi$ is non-decreasing for $x > 0$ and rightcontinuous
(i.e. the right derivative of the function $\Phi$). For $\displaystyle x > 0, \ \frac{\Phi(x)}{x}=\frac{1}{x}\int_{0}^{x}p(t)dt\geq \frac{1}{x}\int_{\frac{x}{2}}^{x}p(t)dt\geq \frac{p(x)}{2}>0,$ and in virtue of (\ref{1.15}), $\lim\limits_{x\rightarrow +\infty}p(x)=0.$ On the other hand, $\displaystyle p(0^+)=\lim\limits_{x\rightarrow 0^+}p(x)=\lim\limits_{x\rightarrow 0^+}\frac{\Phi(x)}{x}=+\infty.$

\begin{prop}\label{phi-1}
A function $\Phi$ is an $N^*-$function if and only if $\Phi^{-1}:\mathbb{R}^+\rightarrow\mathbb{R}^+$ is an $N-$function.
%\footnote {{\it }For more details about N-functions see  \cite{Kra}.}.\\
%	{\footnotesize   }
\end{prop}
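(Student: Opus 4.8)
The plan is to exploit the duality between the defining data of an $N^*$-function and an $N$-function: an $N^*$-function $\Phi$ is built from a \emph{decreasing} right-continuous density $p$ with $p(0^+)=+\infty$ and $p(+\infty)=0$, while an $N$-function $M$ is built from a \emph{non-decreasing} right-continuous density $m$ with $m(0)=0$ and $m(+\infty)=+\infty$. The key elementary fact is that if $p$ is a strictly decreasing continuous bijection of $(0,\infty)$ onto $(0,\infty)$ (which is the ``nice'' case), then its inverse $q=p^{-1}$ is again a strictly decreasing bijection of $(0,\infty)$ onto $(0,\infty)$, and the graph of $\Phi'$ and the graph of $(\Phi^{-1})'$ are reflections of one another across the diagonal. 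More precisely, for a concave increasing $\Phi$ with $\Phi(0)=0$ one has, by the change of variables $s=\Phi(t)$, the Legendre-type relation
\begin{equation*}
\Phi^{-1}(s)=\int_0^s \frac{1}{p(\Phi^{-1}(\sigma))}\,d\sigma,
\end{equation*}
so that the right derivative of $\Phi^{-1}$ at $s$ is $1/p(\Phi^{-1}(s))$. I would first record this identity carefully (using Theorem \ref{thm} to write $\Phi(x)=\int_0^x p(t)\,dt$ and Lemma \ref{1.3} to justify that $\Phi$ is absolutely continuous and strictly increasing on $x>0$, hence invertible there).

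Next I would verify the four requirements of an $N$-function for $M:=\Phi^{-1}$ one at a time. Convexity of $\Phi^{-1}$ is already established in the excerpt (it follows from concavity of $\Phi$ via inequality (\ref{concave})), so that costs nothing. For $M(t)>0$ when $t\neq 0$: since $\Phi$ is strictly increasing on the positive axis with $\Phi(0)=0$, its inverse is strictly increasing with $\Phi^{-1}(0)=0$, giving positivity for $t>0$; then one extends $M$ evenly (or just works on $\Real^+$, as the $N$-function definition in the excerpt effectively does). For the limit $M(t)/t\to 0$ as $t\to 0^+$: writing $t=\Phi(x)$ with $x\to 0^+$, we need $x/\Phi(x)\to 0$, which is exactly the reciprocal of (\ref{1.14}). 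For $M(t)/t\to\infty$ as $t\to\infty$: writing again $t=\Phi(x)$ with $x\to\infty$, we need $x/\Phi(x)\to\infty$, which is the reciprocal of (\ref{1.15}). Thus both asymptotic conditions for $\Phi^{-1}$ are precisely the two conditions (\ref{1.14})–(\ref{1.15}) for $\Phi$ read upside down.

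For the converse direction, I would run the same argument backwards: given that $M=\Phi^{-1}$ is an $N$-function, it is convex, continuous, vanishes at $0$, with $M(t)/t\to 0$ at $0$ and $M(t)/t\to\infty$ at infinity; hence $\Phi=M^{-1}$ is concave (inverse of a convex increasing function is concave), continuous, vanishes at $0$, and satisfies (\ref{1.14})–(\ref{1.15}) by the same reciprocal-limit bookkeeping. By the ``second definition of an $N^*$-function'' established just above the proposition, a continuous concave even function satisfying (\ref{1.14}) and (\ref{1.15}) is an $N^*$-function — so one appeals directly to that characterization rather than reconstructing the density $p$, which streamlines this half considerably. I expect the main obstacle to be the regularity bookkeeping at the endpoints: one must make sure $\Phi$ is genuinely a strictly increasing bijection from $[0,\infty)$ onto $[0,\infty)$ (not merely non-decreasing) so that $\Phi^{-1}$ is well defined and continuous; strict monotonicity follows because $p(t)>0$ for all $t>0$, and surjectivity onto $[0,\infty)$ follows from $\Phi(x)\to\infty$, which is itself a consequence of (\ref{1.15}) together with $\Phi$ being increasing and $\Phi(x)/x$ being bounded below by $p(x)/2>0$ on bounded sets — a point already computed in the paragraph preceding the proposition. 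Once these continuity/bijectivity facts are nailed down, every remaining step is a one-line reciprocal of a limit already in the text.
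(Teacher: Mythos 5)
Your proposal is correct and follows essentially the same route as the paper: the substitution $t=\Phi(x)$ turning the limits (\ref{1.14}) and (\ref{1.15}) into their reciprocals for $\Phi^{-1}$ is exactly the computation in the paper's proof, with convexity of $\Phi^{-1}$ imported from the discussion preceding the proposition. Your additional bookkeeping on bijectivity and the density identity $(\Phi^{-1})'(s)=1/p(\Phi^{-1}(s))$ only makes explicit what the paper leaves implicit.
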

\begin{proof}
\begin{enumerate}
\item $\Phi^{-1}(0)=0\Leftrightarrow \Phi(0)=0$   (from the fact that $\Phi$ is bijective).
\item %$\Phi^{-1}(+\infty)=+\infty$
\begin{eqnarray*}
\displaystyle\lim\limits_{x\rightarrow 0^+}\frac{\Phi^{-1}(x)}{x}&=&\lim\limits_{x\rightarrow 0^+}\frac{\Phi^{-1}(x)}{\Phi(\Phi^{-1}(x))}\\
&=&\displaystyle\lim\limits_{x\rightarrow 0^+}\displaystyle\frac{1}{\frac{\Phi(\Phi^{-1}(x))}{\Phi^{-1}(x)}}\\
&=& \displaystyle\lim\limits_{X\rightarrow 0^+}\frac{1}{\frac{\Phi(X)}{X}} =0.
\end{eqnarray*}
\item
 \begin{eqnarray*}
\displaystyle\lim\limits_{x\rightarrow +\infty}\frac{\Phi^{-1}(x)}{x}&=&\lim\limits_{x\rightarrow +\infty}\frac{1}{\frac{\Phi(\Phi^{-1}(x))}{\Phi^{-1}(x)}}\\
&=&\displaystyle\lim\limits_{x\rightarrow +\infty}\displaystyle\frac{1}{\frac{\Phi(\Phi^{-1}(x))}{\Phi^{-1}(x)}}\\
&=& \displaystyle\lim\limits_{X\rightarrow +\infty}\frac{1}{\frac{\Phi(X)}{X}} =+\infty.
\end{eqnarray*}
The inverse implication proceed as the same way.
\end{enumerate}
\end{proof}

\begin{rem}
we finish this section  by the complementary  $N^*-$function defined as follows:
$$\widehat{\Phi}=\big(\overline{\Phi^{-1}})^{-1}$$ 
deduced from the young's inequality,   which is also  an $N^*-$function.

\end{rem}
%\begin{exm}\label{exemplePhiC}
\textbf{Example.} As an example of $\widehat{\Phi},$ let $X=[0,a]$ with $a\geq1$ and $\Phi(t)=\displaystyle\frac{t^p}{p^p}$ where $0<p\leq 1.$
For any $t\geq 0$ we have 
$$\Phi^{-1}(t)=\frac{t^{1/p}}{1/p} \ \mbox{and} \ \overline{\Phi^{-1}}(t)=\frac{t^{\frac{1}{1-p}}}{\frac{1}{1-p}}.$$
So that 
$$\widehat{\Phi}(t)=\big(\overline{\Phi^{-1}})^{-1}(t)=\frac{t^{1-p}}{(1-p)^{1-p}}.$$
%\end{exm}
%\section{\textbf{More spaces with Dual space 0}}
\section{\textbf{The $L_\Phi$ spaces}}
In this section, we present a new definition of a generalized spaces denoted by $L_{\Phi}$  which is a generalisation of $L^p$ spaces with $0<p<1,$ for any measure space and then extend some results in this generalized framework.
 
 \subsection{Definition and Elementary Properties of $L_\Phi$ Spaces}
 
%and then we highlight on the spaces with dual 0.

% such as, the spaces with dual $0.$
\begin{defn}
Let $(X,\mathcal{M},\mu)$ be a measure space and $\Phi$ be an $N^*-$function, we define the space 
\begin{equation*}
L_\Phi(X)\stackrel{def}{=}:\{  f:X\rightarrow \mathbb{R} \ \mbox{measurable and } \ \int_{X}\Phi(f )d\mu<\infty \},
\end{equation*}
with functions that are equal almost everywhere being identified with one another.
%We define the  metric on $L_\Phi$ spaces by:
%\begin{equation*}
%d_\Phi(f,g)=\int_{X}\Phi(f-g)d\mu.
%\end{equation*}
\end{defn}
%The following remark implies that 
% $d_\Phi$ is  a metric on $L_\Phi$ so that, the  $L_\Phi(X)$ is a vector metric spaces :

%\begin{defn}
%We define the  metric on $L_\Phi$ spaces by:
%\begin{equation*}
%d_\Phi(f,g)=\int_{X}\Phi(f-g)d\mu
%\end{equation*}.
%\end{defn}
\begin{rem}

The  $L_\Phi(X)$ is a vector metric spaces and the metric used on it is defined  as follows :
%The metric used on $L_\Phi(X)$ is
\begin{equation*}
d_\Phi(f,g)=\int_{X}\Phi(f-g)d\mu.
\end{equation*}

\begin{itemize}
\item $d_\Phi(f,g)=0 \Leftrightarrow \int_{X}\Phi(f-g)d\mu=0 \Leftrightarrow f=g.$ 
\item If $f, g, h \in L_\Phi(X),$ then 
\begin{eqnarray*}
d_\Phi(f,g)=\int_{X}\Phi(f-g)d\mu&=& \int_{X}\Phi(f-h+h-g)d\mu\\
&\leq& \int_{X}\Phi(f-h)d\mu+\int_{X}\Phi(h-g)d\mu\\
&\leq& d_\Phi(f,h)+d_\Phi(h,g).
\end{eqnarray*}
\end{itemize}
\end{rem}
%\textbf{Notation.}
%where  $\widehat{\Phi}=\big(\overline{\Phi^{-1}})^{-1}$ is an $N^*-$function.\\

The notion of these spaces extends the usual notion of $L^p$ with $(0<p<1).$ The function  $s \mapsto s^p$ entering the definition of $L^p$ is replaces by a more general concave function $s\mapsto\Phi(s)$ which is the $N^*-$function.\\
Let's give some analogies between the Orlicz Spaces 
$L_M$
%\footnote {{\it }We refer the reader to   \cite{Kra}.}
%	{\footnotesize   } $L_M$
	 where $M$ is an $N-$function and our Spaces $L_\Phi,$ where $\Phi$ is an $N^*-$function.

Some properties of the Orlicz spaces $L_M$ are:

%\begin{propr}
%\end{propr}

\begin{itemize}
%\item Any sequence of measurable functions that is Lp-convergent has a subsequence that
%converges pointwise almost everywhere.
\item[$P_1)$] $(L_M,||.||_M)$ is  complete, and thus is a Banach space for the Luxemburg norm $||.||_M.$
\item[$P_2)$] (H\"{o}lder's inequality)
%the H\"older inequality holds
\[
\int_{X}|u(x)v(x)|\,{\rm d}x \le ||u||_{M}||v||_{\bar{M}}
\mbox{ for all $u\in L_{M}(X)$ and $v\in L_{\bar{M}}(X)$,}
\]
where $M$ is an $N-$function and $\bar{M}$ its complementary function.
\item[$P_3)$](Jensen's inequality)  
$$M\big( \frac{1}{\mu(X)}\int_Xf(t) d\mu\big)\leq  \frac{1}{\mu(X)} \int_X M(f(t)) d\mu$$
for all $f\in L_M(X).$
\item[$P_4)$] In particular, if $X$ has finite measure, both  H\"older's inequality  and Jenson's inequality yield
 the continuous inclusion $L_{M}(X)\subset L^1(X)$.\\ 
For more properties about Orlicz spaces see \cite{A,Kra,M,R}.
%\item If fn ! f in Lp(), then jfnjp ! jfjp in L1().
\end{itemize}

Let's look at the $L_\Phi$ spaces and develop more its properties:

\begin{thm}
For $\Phi$ is $N^*-$function,   $(L_\Phi,d_\Phi)$ is a complete space.
\end{thm}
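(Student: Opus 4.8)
The plan is to run the classical completeness argument for $L^p$-type spaces, adapted to the non-homogeneous ``gauge'' $d_\Phi$. From an arbitrary $d_\Phi$-Cauchy sequence I will extract a rapidly converging subsequence, prove that this subsequence converges $\mu$-a.e.\ to a measurable limit $f$, check that $f\in L_\Phi$ and that the subsequence converges to $f$ in $d_\Phi$, and finally promote this to convergence of the whole Cauchy sequence via the triangle inequality for $d_\Phi$ recorded in the Remark above. The subadditivity inequality (\ref{1.16}) and the boundary behaviour (\ref{1.14}) of an $N^*$-function are the two structural facts that make the argument go through.

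Let $(f_n)$ be $d_\Phi$-Cauchy. First I choose a subsequence $(f_{n_k})$ with $d_\Phi(f_{n_k},f_{n_{k+1}})\le 2^{-k}$ for all $k$, and set $h_k=f_{n_{k+1}}-f_{n_k}$. Since the integrands are nonnegative, the monotone convergence theorem gives $\int_X\sum_k\Phi(h_k)\,d\mu=\sum_k\int_X\Phi(h_k)\,d\mu\le\sum_k 2^{-k}<\infty$, so $\sum_k\Phi(h_k(x))<\infty$ for $\mu$-a.e.\ $x$; in particular $\Phi(h_k(x))\to 0$, and since $\Phi$ is continuous, even and strictly increasing on $[0,\infty)$ with $\Phi(0)=0$, this forces $|h_k(x)|\to 0$. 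Now comes the one genuinely $N^*$-specific point: by (\ref{1.14}) there is $\delta>0$ with $\Phi(t)\ge t$ for $0<t\le\delta$, hence for $\mu$-a.e.\ $x$ and all $k$ large enough $|h_k(x)|\le\Phi(|h_k(x)|)=\Phi(h_k(x))$, so $\sum_k|h_k(x)|\le(\text{finite})+\sum_k\Phi(h_k(x))<\infty$. Therefore the telescoping series $f_{n_1}(x)+\sum_{k\ge 1}h_k(x)$ converges absolutely a.e.; denote its sum by $f(x)$, a measurable function, with $f_{n_k}\to f$ pointwise a.e.

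Next I pass to $d_\Phi$-convergence. Iterating (\ref{1.16}) yields $\Phi\big(\sum_{j=1}^{N}a_j\big)\le\sum_{j=1}^{N}\Phi(a_j)$ for nonnegative $a_j$, and letting $N\to\infty$ (using continuity of $\Phi$ and monotonicity in the argument) gives the countable version $\Phi\big(\sum_{j\ge1}a_j\big)\le\sum_{j\ge1}\Phi(a_j)$. Applying this to $|f(x)-f_{n_k}(x)|\le\sum_{j\ge k}|h_j(x)|$ and integrating, monotone convergence gives $d_\Phi(f_{n_k},f)=\int_X\Phi(f-f_{n_k})\,d\mu\le\sum_{j\ge k}\int_X\Phi(h_j)\,d\mu\le 2^{-k+1}\to 0$. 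Moreover, by (\ref{1.16}), $\Phi(f)\le\Phi(f-f_{n_k})+\Phi(f_{n_k})$ pointwise, and integrating shows $\int_X\Phi(f)\,d\mu\le d_\Phi(f_{n_k},f)+\int_X\Phi(f_{n_k})\,d\mu<\infty$, so $f\in L_\Phi$.

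Finally, since $(f_n)$ is Cauchy and its subsequence $(f_{n_k})$ converges to $f$ in $d_\Phi$, the triangle inequality $d_\Phi(f_n,f)\le d_\Phi(f_n,f_{n_k})+d_\Phi(f_{n_k},f)$ forces $f_n\to f$ in $d_\Phi$; hence $(L_\Phi,d_\Phi)$ is complete. I expect the only delicate step to be the passage from the a.e.\ summability of $\sum_k\Phi(h_k(x))$ to that of $\sum_k|h_k(x)|$ — i.e.\ extracting genuine a.e.\ convergence of the functions rather than merely of their $\Phi$-values — which is precisely where the defining property (\ref{1.14}) of an $N^*$-function enters; the remainder is the standard completeness template combined with the subadditivity (\ref{1.16}).
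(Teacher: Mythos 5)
Your proof is correct and follows essentially the same Riesz--Fischer template as the paper: extract a rapidly convergent subsequence, obtain an a.e.\ limit from the telescoping series, verify $f\in L_\Phi$, and upgrade to $d_\Phi$-convergence of the whole sequence via the triangle inequality. The one substantive (and welcome) local difference is your treatment of a.e.\ absolute convergence of $\sum_k|h_k(x)|$: the paper infers it from $\int_X\Phi\bigl(\sum_k|h_k|\bigr)\,d\mu\le 1$, which forces finiteness of $\sum_k|h_k|$ a.e.\ only when $\Phi(t)\to\infty$ as $t\to\infty$ (not guaranteed by (\ref{N*-function})--(\ref{limit_p}), since $\int_0^\infty p(t)\,dt$ may converge), whereas your use of (\ref{1.14}) to get $\Phi(t)\ge t$ near $0$ works unconditionally; your countable-subadditivity bound $d_\Phi(f_{n_k},f)\le 2^{-k+1}$, replacing the paper's Fatou-on-the-Cauchy-condition step, is an equivalent cosmetic variant.
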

\begin{proof}
Let $(f_n)$ be a cauchy sequence of $L_\Phi.$
We select a representatives and identify them at $f_n.$ We can build a subsequence $(f_{n_i})$ verifies 
$$ d_\Phi(f_{n_{i+1}},f_{n_i})\leq 2^{-i}$$ 
then we pose 
$$g_k=\sum_{i=1}^{k}|f_{n_{i+1}}-f_{n_i}|$$
and $$g=\sum_{i=1}^{+\infty}|f_{n_{i+1}}-f_{n_i}|$$
these are functions with values in  $[0,+\infty],$ triangular inequality shows that:
$$d_\Phi(g_k,0)\leq 1.$$
 Fatou's lemma or the monotone convergence theorem then gives 
$$d_\Phi(g,0)\leq 1$$
%$(\displaystyle\int_Xg d\mu=\int_X \lim\limits_{k\rightarrow +\infty}g_k d\mu\leq \lim\limits_{k\rightarrow +\infty}\int_X\Phi(g_k)d\mu\leq 1 ).$\\
In particular this shows that these functions are finite  almost everywhere and therefore for almost any $x,$ the series $\displaystyle\sum f_{n_{i+1}}(x)-f_{n_i}(x)$ 
converge absolutely. As $ \mathbb{R}$ is complete. This series converges and since it is a telescopic series, it means that $f_{n_i}(x)$ converges to the sum of the series noted $f(x). $\\ $$\displaystyle \sum_{i=1}^{k}f_{n_{i+1}}(x)-f_{n_i}(x)=f_{n_k}(x)-f_{n_{k-1}}(x)+f_{n_{k-1}}(x)-f_{n_{k-3}}(x)-...-f_{n_{1}}(x))$$
 This defines almost everywhere, then one completes by $0. $ It remains to be shown that $f$ is in $L_\Phi$ and $(f_n)$ converges to $f$ for distance $d_\Phi$ of $L_\Phi.$\\
Let’s go back to the fact that $(f_n)$ is  Cauchy sequence: 
$$\forall \varepsilon>0, \exists N, \forall n,m\geq N, \ \int_{X}\Phi(f_n-f_m)<\Phi(\varepsilon).$$
By taking $n=n_i,$ we have by  Fatou's lemma: $$\int_X\Phi(f-f_m)d\mu\leq \lim\limits_{}\inf \int_X \Phi(f_{n_i}-f_m)d\mu<\Phi(\varepsilon)$$
and 
$$\displaystyle\int_X\Phi(f)d\mu= \int_X\Phi(f-f_m+f_m)d\mu\leq \int_X\Phi(f-f_m)d\mu+\int_X\Phi(f_m)d\mu<\infty$$
then $f\in L_\Phi, \ \mbox{and}\ f_n\rightarrow f \ \mbox{in} \ L_\Phi.$\\
\end{proof}

\begin{thm}\label{ThmY}
For $f\in L_\Phi$ and $g\in L_{\widehat{\Phi}}$
\begin{equation}\label{notHolder}
\int_{X}\Phi(f) \widehat{\Phi}(g)\,{\rm d}x \le \int_{X}|f|dx+\int_{X}|g|dx
\end{equation}
where  $\widehat{\Phi}$ is a complementary  $N^*-$function.
\end{thm}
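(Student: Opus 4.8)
The plan is to reduce the statement to a pointwise inequality and then integrate. Since $\Phi$ and $\widehat{\Phi}$ are even, proving (\ref{notHolder}) amounts to showing that
$$\Phi(a)\,\widehat{\Phi}(b)\le a+b\qquad\text{for all }a,b\ge0,$$
after which one substitutes $a=|f(x)|$, $b=|g(x)|$ and integrates over $X$.

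For the pointwise estimate the key remark is that it is nothing but the classical Young inequality for $N$-functions, transported through the inverse map. By Proposition \ref{phi-1} the function $M:=\Phi^{-1}$ is an $N$-function on $\mathbb{R}^+$; let $\overline{M}$ be its conjugate $N$-function, so that, by the definition of the complementary $N^*$-function, $\widehat{\Phi}=(\overline{\Phi^{-1}})^{-1}=\overline{M}^{-1}$. Fix $a,b\ge0$ and put $u:=\Phi(a)$ and $v:=\widehat{\Phi}(b)$. Because $M=\Phi^{-1}$ is the inverse of $\Phi$ on $[0,\infty)$ we have $M(u)=M(\Phi(a))=a$; because $\widehat{\Phi}=\overline{M}^{-1}$ we have $\overline{M}(v)=\overline{M}(\overline{M}^{-1}(b))=b$. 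Young's inequality for the $N$-function $M$ (see \cite{Kra}) states that $uv\le M(u)+\overline{M}(v)$, which upon inserting the values just computed becomes exactly $\Phi(a)\widehat{\Phi}(b)\le a+b$, the desired pointwise bound.

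It remains to globalize. Applying the pointwise inequality with $a=|f(x)|$ and $b=|g(x)|$, and using the evenness of $\Phi$ and $\widehat{\Phi}$, yields $\Phi(f(x))\,\widehat{\Phi}(g(x))\le|f(x)|+|g(x)|$ for every $x\in X$; integrating this inequality between nonnegative measurable functions gives (\ref{notHolder}) in $[0,+\infty]$. The bound carries real content precisely when the right-hand side is finite, for instance when $f,g\in L^1(X)$; otherwise it holds trivially, since, unlike the Orlicz case $P_4)$, $L_\Phi$ need not embed into $L^1$. The only point requiring genuine care is the bookkeeping: the identity $\widehat{\Phi}=(\overline{\Phi^{-1}})^{-1}$ combined with the verification that Young's inequality for $M=\Phi^{-1}$, available from the $N$-function theory recalled in Section 3 and in \cite{Kra}, applies with the substitutions $u=\Phi(a)$, $v=\widehat{\Phi}(b)$; everything else is routine.
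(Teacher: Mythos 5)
Your proof is correct and follows essentially the same route as the paper: both apply Young's inequality for the $N$-function $M=\Phi^{-1}$ (via Proposition \ref{phi-1}) with the substitutions $u=\Phi(|f|)$, $v=\widehat{\Phi}(|g|)$, so that $M(u)=|f|$ and $\overline{M}(v)=|g|$, and then integrate. You merely make explicit the substitution step that the paper's one-line proof leaves implicit, which is a welcome clarification rather than a different argument.
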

\begin{proof}
it sufficient to apply the following  Young's inequality:
$$|fg|\leq \Phi^{-1}(f)+\overline{\Phi^{-1}}(g)$$
where  $\Phi^{-1}$ is an $N-$function owing to Proposition \ref{phi-1}. We get (\ref{notHolder}) with  $\widehat{\Phi}=[\overline{\Phi^{-1}}]^{-1}.$ Moreover, it is clear to show that $\widehat{\widehat{\Phi} } =\Phi.$
\end{proof}

If $X$ has finite measure we have the following results.

\begin{prop}\label{Ojens}
For $f\in L_\Phi(X),$ 
%and $\mu(X)<\infty,$
 then 
\begin{equation*}
\Phi(\frac{1}{\mu(X)}\int_{X}|f(t)|d\mu)\geq \frac{1}{\mu(X)}\int_{X}\Phi(f)d\mu.
\end{equation*}
\end{prop}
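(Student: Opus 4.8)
The plan is to read the stated inequality as the Jensen inequality for the \emph{concave} function $\Phi$ against the probability measure $d\nu=\frac{1}{\mu(X)}\,d\mu$ on $X$. Since every $N^*$-function is even (Section 3.1), $\Phi(f)=\Phi(|f|)$ pointwise, so I may as well take $f\ge 0$. Put $a:=\frac{1}{\mu(X)}\int_X f\,d\mu$. I shall treat the main case $a<+\infty$; if $\int_X f\,d\mu=+\infty$ the inequality is immediate because $\Phi$ is non-decreasing on $[0,\infty)$, so $\Phi(f)\le \sup_{t\ge 0}\Phi(t)=\lim_{t\to+\infty}\Phi(t)$ a.e., and one interprets the left-hand side with this value of $\Phi$ at $+\infty$. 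If $a=0$ then $f=0$ a.e. and both sides vanish since $\Phi(0)=0$; so assume $a>0$.

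The crux is the existence of a supporting line to the graph of $\Phi$ at the point $a$. By Section 3.1 the function $\Phi$ is continuous and concave on $[0,\infty)$, hence by Lemma \ref{lem1} it has a right derivative $p_+(a)$ and a left derivative $p_-(a)$ with $p_-(a)\ge p_+(a)$. Using that the difference quotients of a concave function are non-increasing, one gets $\Phi(t)-\Phi(a)\le p_+(a)(t-a)$ for $t>a$, and for $0\le s<a$ one gets $\Phi(a)-\Phi(s)\ge p_-(a)(a-s)\ge p_+(a)(a-s)$; together with the trivial case $t=a$ this yields
\begin{equation*}
\Phi(t)\le \Phi(a)+p_+(a)\,(t-a)\qquad\text{for all } t\ge 0 .
\end{equation*}
Substituting $t=f(x)$, integrating over $X$ with respect to $\mu$, and dividing by $\mu(X)$ gives
\begin{equation*}
\frac{1}{\mu(X)}\int_X\Phi(f)\,d\mu\le \Phi(a)+p_+(a)\Big(\frac{1}{\mu(X)}\int_X f\,d\mu-a\Big)=\Phi(a)=\Phi\Big(\frac{1}{\mu(X)}\int_X|f|\,d\mu\Big),
\end{equation*}
since the parenthesis vanishes by the definition of $a$ and $\Phi(f)=\Phi(|f|)$. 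This is exactly the asserted inequality.

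I expect the only genuinely delicate point to be the integrability bookkeeping: ensuring that $a$ is finite (or disposing of the case $a=+\infty$ as above) and recalling that $\Phi(f)\in L^1(X)$ is built into the hypothesis $f\in L_\Phi(X)$. If one prefers to avoid derivatives altogether, an alternative is to prove the inequality first for nonnegative simple functions by iterating the averaged form of the concavity inequality following $(\ref{concave})$ — writing the weights $\mu_k/\mu(X)$ over a common denominator so that the $n$-term version applies — and then to pass to a general $f\ge 0$ by monotone approximation, using the continuity of $\Phi$ and the monotone convergence theorem; this route relies only on material already established in Sections 2 and 3.
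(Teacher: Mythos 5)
Your proof is correct, and it is a close cousin of the paper's argument rather than a verbatim match: both rest on majorizing the concave function $\Phi$ by affine functions, but you implement this with a single supporting line at the mean value $a=\frac{1}{\mu(X)}\int_X|f|\,d\mu$, whereas the paper writes $\Phi$ globally as a countable lower envelope $\Phi(x)=\inf_{n\ge0}(a_nx+b_n)$ (misdescribed there as ``convex''/``upper envelope''; $\Phi$ is concave and this is a lower envelope) and then interchanges the infimum with the integral. Your route has two advantages: it relies only on material the paper actually proves (the one-sided derivatives of Lemma \ref{lem1} and the monotonicity of the chord slopes), whereas the countable-envelope representation is asserted without justification; and you dispose explicitly of the degenerate cases $a=0$ and $\int_X|f|\,d\mu=+\infty$ --- the latter is a real possibility here, since $f\in L_\Phi$ does not force $f\in L^1$, and the paper's proof silently assumes the mean is finite. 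The paper's envelope version, for its part, avoids any case analysis at the point $a$ and yields the inequality in one line once the representation is granted. One tiny caveat on your side: in the case $\int_X|f|\,d\mu=+\infty$ the left-hand side is only meaningful under the convention $\Phi(+\infty)=\sup_{t\ge0}\Phi(t)$, which you state; with that convention your bound $\Phi(f)\le\sup\Phi$ a.e.\ does give the claim. Everything else, including the finiteness of $p_+(a)$ for $a>0$ (Lipschitz continuity away from the origin, Lemma \ref{1.3}), checks out.
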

\begin{proof}
The $N^*-$function $\Phi$ is  convex, then it can be written as the upper envelope of refined functions
%The $N^*-$function  is a concave function, then
\begin{equation*}
\Phi(x)= \inf_{n\geq0}(a_nx+b_n),
\end{equation*}
where  $a_n,b_n \in \mathbb{R}.$ Which gives
\begin{eqnarray*}
\Phi(\frac{1}{\mu(X)}\int_{X}|f(t)|d\mu)&=&\inf_{n\geq0}\big[\frac{a_n}{\mu(X)}\int_{X}|f(t)|d\mu+b_n\big]\\
%&=&\inf_{n\geq0}\frac{1}{\mu(X)}\big[a_n\int_{X}|f(t)|d\mu+\int_{X}b_nd\mu\big]\\
%&=&\inf_{n\geq0}\frac{1}{\mu(X)}\big[\int_{X}(a_n|f(t)|+b_n)d\mu\big]\\
&\geq&\frac{1}{\mu(X)}\int_{X}\inf_{n\geq0}(a_n|f(t)|+b_n)d\mu\\
&\geq&\frac{1}{\mu(X)}\int_{X}\Phi(f)d\mu. \\
\end{eqnarray*}
\end{proof}
From Theorem \ref{ThmY} and Proposition \ref{Ojens}  yield
 the continuous inclusion $L^1(\Omega)\subset L_{\Phi}(\Omega) $.
\begin{thm}
Let $f_n,f\in L^1(X),$ if
% $\Omega$ has finite measure and 
 $\parallel f_n-f\parallel_1\rightarrow0,$ then $d_\Phi(f_n,f)\rightarrow0.$
\end{thm}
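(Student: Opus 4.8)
The plan is to reduce the whole statement to one truncation estimate and then take limits in the right order. Write $g_n=f_n-f$, so $g_n\in L^1(X)$ with $\|g_n\|_1\to0$; since $\Phi$ is even, $d_\Phi(f_n,f)=\int_X\Phi(|g_n|)\,d\mu$. I will freely use the facts recorded at the beginning of Subsection~3.1, namely that $\Phi$ is continuous, non-decreasing on $[0,\infty)$, and $\Phi(0)=0$, together with the sub-homogeneity inequality $\Phi(\alpha t)\ge\alpha\Phi(t)$ for $0\le\alpha\le1$, which is (\ref{1.13}). I will also assume $\mu(X)<\infty$, in keeping with the standing hypothesis of this part of the section.

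The first step is the splitting. Fix $\lambda>0$ and decompose $X=\{|g_n|\le\lambda\}\cup\{|g_n|>\lambda\}$. On $\{|g_n|\le\lambda\}$, monotonicity of $\Phi$ gives $\Phi(|g_n|)\le\Phi(\lambda)$, hence $\int_{\{|g_n|\le\lambda\}}\Phi(|g_n|)\,d\mu\le\Phi(\lambda)\,\mu(X)$; this is the only place the finiteness of $\mu(X)$ is used. On $\{|g_n|>\lambda\}$, apply (\ref{1.13}) with $\alpha=\lambda/|g_n(x)|\in(0,1)$ and $t=|g_n(x)|$ to get $\Phi(\lambda)\ge\frac{\lambda}{|g_n(x)|}\Phi(|g_n(x)|)$, i.e. $\Phi(|g_n(x)|)\le\frac{\Phi(\lambda)}{\lambda}|g_n(x)|$, and integrating yields $\int_{\{|g_n|>\lambda\}}\Phi(|g_n|)\,d\mu\le\frac{\Phi(\lambda)}{\lambda}\|g_n\|_1$. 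Adding the two contributions,
\[
d_\Phi(f_n,f)\;\le\;\Phi(\lambda)\,\mu(X)\;+\;\frac{\Phi(\lambda)}{\lambda}\,\|f_n-f\|_1 .
\]

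The second step is a two-stage passage to the limit. Keeping $\lambda$ fixed and letting $n\to\infty$, the hypothesis $\|f_n-f\|_1\to0$ gives $\limsup_{n\to\infty}d_\Phi(f_n,f)\le\Phi(\lambda)\,\mu(X)$ for every $\lambda>0$. Then letting $\lambda\to0^+$ and using continuity of $\Phi$ at $0$ with $\Phi(0)=0$ forces $\limsup_{n\to\infty}d_\Phi(f_n,f)=0$, i.e. $d_\Phi(f_n,f)\to0$, which is the claim.

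The computation itself is routine; the one point that needs care — and the main obstacle — is the order of the limits. One cannot let $\lambda\to0$ in the bound uniformly in $n$, since the coefficient $\Phi(\lambda)/\lambda$ blows up as $\lambda\to0$ (this is exactly (\ref{1.14})); one must first exhaust $n$ at fixed $\lambda$ and only afterwards shrink $\lambda$. It is also worth flagging that the finite-measure hypothesis is genuinely required: on a space of infinite measure the implication fails — for instance, with $\Phi(x)=\sqrt{|x|}$ on $[0,\infty)$ and $g_n=\tfrac1n\mathbf{1}_{[0,\sqrt n\,]}$ one has $\|g_n\|_1\to0$ while $\int\Phi(g_n)\equiv1$ — so the statement should be read under $\mu(X)<\infty$.
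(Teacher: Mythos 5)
Your proof is correct, but it follows a genuinely different route from the paper's. The paper's argument is a one-line application of the concave Jensen-type inequality of Proposition \ref{Ojens}: since $\Phi$ is concave, $\int_X\Phi(|f_n-f|)\,d\mu\leq\mu(X)\,\Phi\bigl(\tfrac{1}{\mu(X)}\|f_n-f\|_1\bigr)$, and the right-hand side tends to $\mu(X)\Phi(0)=0$ by continuity of $\Phi$. You instead avoid Jensen entirely and use a truncation at level $\lambda$, bounding the small-value part by $\Phi(\lambda)\mu(X)$ via monotonicity and the large-value part by $\tfrac{\Phi(\lambda)}{\lambda}\|f_n-f\|_1$ via the sub-homogeneity $\Phi(\alpha t)\geq\alpha\Phi(t)$ for $0\leq\alpha\leq1$, then take the limits in the order $n\to\infty$ first, $\lambda\to0^+$ second. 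Both arguments are valid and both use $\mu(X)<\infty$ in exactly one place (the paper through the $\mu(X)$ factor in Jensen, you through $\mu(\{|g_n|\leq\lambda\})\leq\mu(X)$). The paper's route is shorter once Proposition \ref{Ojens} is available; yours is more elementary, makes the role of the two competing effects (the blow-up of $\Phi(\lambda)/\lambda$ near $0$ versus the smallness of $\|f_n-f\|_1$) explicit, and your counterexample $\Phi(x)=\sqrt{|x|}$, $g_n=\tfrac1n\mathbf{1}_{[0,\sqrt{n}]}$ is a worthwhile addition showing the finite-measure hypothesis cannot be dropped -- a point the paper's statement leaves implicit.
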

\begin{proof}
We use  the fact that  $L^1\subset L_\Phi(X),$ as $f_n,f\in L^1(X),$ then $f_n-f\in L_\Phi(X).$ So 
   $$d(f_n,f)=\Phi(|f_n-f|)\leq \mu(X)\Phi(\frac{1}{\mu(X)}\int_X|f_n-f|d\mu)$$
   as $\parallel f_n-f\parallel_1\rightarrow0,$ and $\Phi$ is function continuous, then $d(f_n,f)\rightarrow0.$
\end{proof}

\begin{thm}
The elements of $L^1$ form a dense in $L_\Phi(X).$
\end{thm}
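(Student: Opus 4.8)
The plan is to show that simple functions are dense in $L_\Phi(X)$ and that every simple function belonging to $L_\Phi(X)$ also lies in $L^1(X)$; since $L^1(X) \subset L_\Phi(X)$ by the inclusion established above, this will give the density of $L^1(X)$ in $L_\Phi(X)$. First I would fix $f \in L_\Phi(X)$ and, by splitting into positive and negative parts, reduce to the case $f \geq 0$. Then I would choose the standard increasing sequence of nonnegative simple functions $s_n$ with $0 \leq s_n \leq f$ and $s_n \to f$ pointwise. Each $s_n$ is supported on a set of finite measure (on the set where $f > 1/n$, which has finite measure because $\Phi(f)$ is integrable and $\Phi$ is bounded below by a positive constant away from $0$), so $s_n \in L^1(X)$ and hence $s_n \in L_\Phi(X)$.

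Next I would estimate $d_\Phi(f, s_n) = \int_X \Phi(f - s_n)\, d\mu$. Since $0 \leq f - s_n \leq f$ and $\Phi$ is nondecreasing on $[0,\infty)$, we have $\Phi(f - s_n) \leq \Phi(f) \in L^1(X)$, and $\Phi(f - s_n) \to 0$ pointwise almost everywhere by continuity of $\Phi$ together with $\Phi(0) = 0$. The dominated convergence theorem then yields $d_\Phi(f, s_n) \to 0$, so $f$ is approximated in the metric $d_\Phi$ by elements of $L^1(X)$. This proves that $L^1(X)$ is dense in $L_\Phi(X)$.

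The main obstacle, and the only point requiring a little care, is the argument that the truncations $s_n$ actually lie in $L^1(X)$: this uses that $\{x : f(x) > \delta\}$ has finite measure for every $\delta > 0$, which follows from Chebyshev's inequality applied to $\Phi(f)$ since $\mu(\{f > \delta\}) \leq \frac{1}{\Phi(\delta)}\int_X \Phi(f)\, d\mu < \infty$, using $\Phi(\delta) > 0$. One must also note that $X$ itself need not have finite measure here, so one cannot simply invoke the finite-measure inclusions; instead the domination $\Phi(f - s_n) \leq \Phi(f)$ and ordinary dominated convergence do the work directly. I would write the proof as follows.

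\begin{proof}
Since $L^1(X) \subset L_\Phi(X)$, it suffices to approximate an arbitrary $f \in L_\Phi(X)$ in the metric $d_\Phi$ by functions of $L^1(X)$. Writing $f = f^+ - f^-$, we may assume $f \geq 0$. Let $(s_n)$ be a nondecreasing sequence of nonnegative simple functions with $s_n \to f$ pointwise and $0 \leq s_n \leq f$. For each $n$, the function $s_n$ vanishes outside the set $\{x : f(x) > c_n\}$ for some $c_n > 0$ (the smallest nonzero value of $s_n$), and by Chebyshev's inequality
\begin{equation*}
\mu\big(\{x : f(x) > c_n\}\big) \leq \frac{1}{\Phi(c_n)} \int_X \Phi(f)\, d\mu < \infty,
\end{equation*}
since $\Phi(c_n) > 0$. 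Hence each $s_n$ is integrable, so $s_n \in L^1(X) \subset L_\Phi(X)$.

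It remains to show $d_\Phi(f, s_n) \to 0$. Since $\Phi$ is nondecreasing on $[0, +\infty)$ and $0 \leq f - s_n \leq f$, we have $0 \leq \Phi(f - s_n) \leq \Phi(f)$, and $\Phi(f) \in L^1(X)$ because $f \in L_\Phi(X)$. Moreover $\Phi$ is continuous with $\Phi(0) = 0$, so $\Phi(f - s_n) \to 0$ pointwise almost everywhere. By the dominated convergence theorem,
\begin{equation*}
d_\Phi(f, s_n) = \int_X \Phi(f - s_n)\, d\mu \To 0.
\end{equation*}
Thus $f$ is the $d_\Phi$-limit of elements of $L^1(X)$, which proves that $L^1(X)$ is dense in $L_\Phi(X)$.
\end{proof}
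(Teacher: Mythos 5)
Your proof is correct and follows essentially the same route as the paper: approximate $f$ by simple functions $s_n$ with $|s_n|\le|f|$, dominate $\Phi(f-s_n)$ by an integrable function, and apply the dominated convergence theorem. You are in fact more careful than the paper on the one genuinely delicate point --- the paper never checks that the approximating simple functions lie in $L^1$ (which can fail on an infinite-measure space), whereas your Chebyshev estimate $\mu(\{f>c_n\})\le \Phi(c_n)^{-1}\int_X\Phi(f)\,d\mu$ settles it --- and your domination $\Phi(f-s_n)\le\Phi(f)$, coming from $0\le f-s_n\le f$ and monotonicity, is cleaner than the paper's bound ``$\le\varepsilon\Phi(|f|)$''.
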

\begin{proof}
let $f\in L_\Phi\setminus L^1$ and $(f_n)_{n=1}^\infty$ be the simple function is the approximation of $f.$ 
 Since $|f_n|\leq |f|$ for all $n,$ and 
 $$\Phi(|f-f_n|)\leq \Phi(|f|)+\Phi(|f_n|)\leq \varepsilon\Phi(|f|)\in L^1.$$
Therefore, by the dominated convergence theorem
$$\lim\limits_{n\rightarrow+\infty}d(f_n,f)=\lim\limits_{n\rightarrow+\infty}\int_{X}\Phi(|f_n-f|)d\mu=\int_{X}\lim\limits_{n\rightarrow+\infty}\Phi(|f_n-f|)d\mu=0.$$ 
\end{proof}
We can  conclude from this analogies that our spaces $L_\Phi$ is including $L^1, L^p$
 for $0<p<1$ and the Orlicz spaces $L_M$ where $M$ is $N-$function.\\

It is known that if $M$ is $N-$function, then for every $\alpha,$ we have $\alpha<M^{-1}(\alpha)+(M^*)^{-1}(\alpha)\leq 2\alpha.$ Since $\Phi$ is $N^*-$function, then $\Phi^{-1}$ is  $N-$function, therefore:
\begin{equation}\label{result}
\alpha<\Phi(\alpha)+\widehat{\Phi}(\alpha) \leq 2 \alpha
\end{equation}
  for $\alpha>0.$ By this property we can get the following proposition: 

\begin{prop}
We have $L_{\Phi}\cap L_{\widehat{\Phi}}=L^1.$
\end{prop}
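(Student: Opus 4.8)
The plan is to derive the set equality directly from the two--sided pointwise estimate (\ref{result}) by integration, with no appeal to measure--theoretic machinery beyond monotonicity of the integral. Recall that, being $N^*$--functions, $\Phi$ and $\widehat{\Phi}$ are even and vanish at the origin; hence for every real number $s$, applying (\ref{result}) with $\alpha=|s|$ when $s\neq 0$ and using the trivial identity $0=\Phi(0)+\widehat{\Phi}(0)$ when $s=0$, one gets the pointwise bound
$$|s|\ \le\ \Phi(s)+\widehat{\Phi}(s)\ \le\ 2|s|.$$

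First I would establish the inclusion $L_\Phi\cap L_{\widehat{\Phi}}\subseteq L^1$. Let $f\in L_\Phi\cap L_{\widehat{\Phi}}$, so that $\int_X\Phi(f)\,d\mu<\infty$ and $\int_X\widehat{\Phi}(f)\,d\mu<\infty$. Substituting $s=f(x)$ into the left half of the displayed inequality and integrating over $X$ gives
$$\int_X|f|\,d\mu\ \le\ \int_X\big(\Phi(f)+\widehat{\Phi}(f)\big)\,d\mu\ =\ \int_X\Phi(f)\,d\mu+\int_X\widehat{\Phi}(f)\,d\mu\ <\ \infty,$$
so $f\in L^1$. Conversely, for $L^1\subseteq L_\Phi\cap L_{\widehat{\Phi}}$, take $f\in L^1$ and use the right half: pointwise $\Phi(f)\le\Phi(f)+\widehat{\Phi}(f)\le 2|f|$, and likewise $\widehat{\Phi}(f)\le 2|f|$, whence
$$\int_X\Phi(f)\,d\mu\ \le\ 2\int_X|f|\,d\mu<\infty,\qquad \int_X\widehat{\Phi}(f)\,d\mu\ \le\ 2\int_X|f|\,d\mu<\infty,$$
so $f\in L_\Phi$ and $f\in L_{\widehat{\Phi}}$. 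Combining the two inclusions yields $L_\Phi\cap L_{\widehat{\Phi}}=L^1$.

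Honestly there is no serious obstacle here: the work is entirely bookkeeping. The only points requiring a word of care are that $\Phi(f)$ is to be read as $\Phi(|f|)$, which is legitimate because $\Phi$ is even; that the inequality (\ref{result}) is asserted only for $\alpha>0$, so the set $\{f=0\}$ must be dealt with separately (trivially, since both sides vanish there); and that $\Phi(f),\widehat{\Phi}(f)$ are measurable, which follows from continuity of $\Phi$ and $\widehat{\Phi}$. One could instead obtain the reverse inclusion from the continuous inclusions $L^1\subset L_\Phi$ and $L^1\subset L_{\widehat{\Phi}}$ recorded after Proposition \ref{Ojens}, but using (\ref{result}) directly keeps the proof self--contained and imposes no finiteness hypothesis on $\mu$.
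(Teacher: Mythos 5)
Your proof is correct and follows essentially the same route as the paper: both directions are obtained by integrating the two-sided pointwise bound (\ref{result}), using the left inequality for $L_\Phi\cap L_{\widehat{\Phi}}\subseteq L^1$ and the right inequality for $L^1\subseteq L_\Phi\cap L_{\widehat{\Phi}}$. Your version is in fact slightly cleaner, since the paper's forward direction bounds the sum by $2\int_X\Phi(|f|)\,d\mu$ where $2\int_X|f|\,d\mu$ is what is actually wanted.
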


\begin{proof}
Owing to the  inequality (\ref{result}), we have for all $f\in L^1$ 
\begin{equation*}
\int_X \Phi(|f|)d\mu +\int_X {\widehat{\Phi}}(|f|)d\mu\leq 2 \int_X \Phi(|f|)d\mu <+\infty,
\end{equation*} 
then 
$$\int_X \Phi(|f|)d\mu<+\infty \ \mbox{and}\ \int_X \widehat{\Phi}(|f|)d\mu<+\infty . $$
Hence $$f\in L_{\Phi}\cap L_{\widehat{\Phi}}. $$
Let now $ f\in L_{\Phi}\cap L_{\widehat{\Phi}};$ then $\int_X \Phi(|f|)d\mu<+\infty $ and $\int_X {\widehat{\Phi}}(|f|)d\mu<+\infty,$ so 
$$\int_X |f|d\mu<\int_X \Phi(|f|)d\mu+\int_X \widehat{\Phi}(|f|)d\mu<+\infty,$$
therefore, $f\in L^1,$ so
 that $L_{\Phi}\cap L_{\widehat{\Phi}}\subseteq L^1.$ Then we get the result.
\end{proof}

% % % % %Part of Dual space 0 % % % % % % % % % % % % % % % % % % % % % % % % % % % % % % % % % % % %

%Let's meet some

   \subsection{\textbf{$L_\Phi$ spaces with $\Phi^{-1}\in \Delta_2$}}
   
 \vspace*{0.5cm}Let's recall first the  definition of  $\Delta_2$-condition:
 \begin{defn}
 An $N$-function $M$ is said to satisfy the $\Delta _2$-condition if,
 for some $k>2$,
 \begin{equation}\label{7}
 M(2t)\leq k\,M(t)\quad {\mbox{for all }}t\geq 0.
 \end{equation}
% When this inequality  holds only for $t\geq t_0>0$, $M$ is said to
% satisfy the $\Delta _2$-condition near infinity.
We can denote by $M\in \Delta_2$ when $M$ is satisfying the $\Delta_2$-condition.

 \end{defn}
 
 \begin{rem}
 Let $\Phi^{-1}\in \Delta_2,$  then there exists $k_0>2$ such that, $\Phi^{-1}(2x)\leq k_0\Phi^{-1}(x); \ \forall x>0,$  if and only if there exists $ k_0>2 $ such that
 \begin{equation}\label{Ndelta2}
    2\Phi(x)\leq \Phi(k_0x) \ \mbox{for all}\ x>0.
 \end{equation} 
 %\end{prop}
%\begin{proof}
  %For $\Phi^{-1}\in \Delta_2,$
   We take $\psi_x(t)=\Phi(tx)-2\Phi(x), \ \forall t\geq 2 \ \mbox{and}\ \forall x>0. $
   Then 
   \begin{itemize}
   \item[i)] $\psi_x$ is continuous on $[2,+\infty[$ for each $x>0.$
    \item[ii)] $\psi_x(k_0)=\Phi(k_0x)-2\Phi(x)\geq 0 \ \forall x>0$ and $\psi_x(2)=\Phi(2x)-2\Phi(x)\leq 0 \ \forall x>0$
   \end{itemize}
   there is $k\in [2,k_0],$ such that $\Phi_x(k)=0 \ \forall x>0$
   equivalently to 
   \begin{equation}\label{Ndelta22}
      2\Phi(x)=\Phi(kx) \ \forall x>0.
   \end{equation} 
 \end{rem}  
 
 In the sequel we suppose that $\Phi^{-1}\in \Delta_2,$ so that $\Phi$ is satisfying the assumption (\ref{Ndelta22}).  \\
 
%Let $\Phi^{-1}\in \Delta_2,$ then there exists $k_0>2$ such that, $\Phi^{-1}(2x)\leq k_0\Phi^{-1}(x); \ \forall x>0$ if and only if there exists $ k_0>2 $ such that
%\begin{equation}\label{Ndelta2}
%   2\Phi(x)\leq \Phi(k_0x) \ \mbox{for all}\ x>0.
%\end{equation} 
%  We take $\psi_x(t)=\Phi(tx)-2\Phi(x), \ \forall t\geq 2 \ \mbox{and}\ \forall x>0. $
%  Then 
%  \begin{itemize}
%  \item[i)] $\psi_x$ is continuous on $[2,+\infty[$ for each $x>0.$
%   \item[ii)] $\psi_x(k_0)=\Phi(k_0x)-2\Phi(x)\geq 0 \ \forall x>0$ and $\psi_x(2)=\Phi(2x)-2\Phi(x)\leq 0 \ \forall x>0$
%  \end{itemize}
%  there is $k\in [2,k_0],$ such that $\Phi_x(k)=0 \ \forall x>0$
%  equivalently to 
%  \begin{equation}\label{Ndelta22}
%     2\Phi(x)=\Phi(kx) \ \forall x>0.
%  \end{equation} 
%  

  We recall the following definition of a quasi-norm on a real vector space $X$ is a map $x\mapsto \|x\| \ (X\rightsquigarrow \mathbb{R})$ such that:
  \begin{itemize}
  \item $\|x\|>0$ if $x\neq 0,$
  \item $\|tx\|=|t|\|x\|$ for $x\in X, \ t\in \mathbb{R},$
  \item $\|x+y\|\leq k(\|x\|+\|y\|)$ for $x,y\in X,$
  \end{itemize}
where $k$ is a constant independent of $x$ and $y.$ The best such constant $k$ is called the modulus of concavity of the quasi-norm. If $k\leq 1,$ then the quasi-norm is called a norm.\\
The sets $\{x\in X: \|x\|<\varepsilon\}$ for $\varepsilon>0$ form a base of neighbourhoods of a Hausdorff vector topology on $X.$  This topology is (locally) p-convex, where $0<p\leq1$ if for some constant $A$ and any $x_1,x_2,..,x_n\in X, \ \|x_1+x_2+..+x_n\|\leq A (\|x_1\|^p+..+\|x_n\|^p)^{1/p}.$   
In this case we may endow $X$ with an equivalent quasi-norm:
$$\|x\|^* =\inf \{ (\sum_{i=1}^{n}\|x_i\|^p)^{1/p}; \ x_1+x_2+..+x_n=x \}$$ 
and then 
$$\|x\|\geq \|x\|^*\geq A^{-1}\|x\|; \ x\in X;$$ 
and $\|.\|^*$ is $p$-sub-additive, i.e.
$$\|x_1+x_2+..+x_n\|^*\leq (\sum_{i=1}^{n}\|x_i\|^{*p})^{1/p}.$$

  \begin{prop}
  The function defined by: $\displaystyle\|f\|_\Phi = \inf\{\lambda>0, \int_X \Phi(\frac{|f|}{\lambda})d\mu \leq 1  \}$ is a quasi-norm on $L_\Phi$ space. $(L_\Phi, \|\|_\Phi)$ is a quasi-normed space.
  \end{prop}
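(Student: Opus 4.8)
The plan is to verify, in turn, the three axioms of the quasi-norm recalled just above — strict positivity on $L_\Phi\setminus\{0\}$ together with finiteness of $\|\cdot\|_\Phi$, absolute homogeneity, and the quasi-triangle inequality — the last being the only non-routine point; since $L_\Phi$ is already known to be a real vector space (the Remark following the definition of $L_\Phi$), this establishes that $(L_\Phi,\|\cdot\|_\Phi)$ is a quasi-normed space. Write $S_f:=\{\lambda>0:\int_X\Phi(|f|/\lambda)\,d\mu\le 1\}$, so that $\|f\|_\Phi=\inf S_f$. Since $\Phi$ is non-decreasing, $\lambda\mapsto\int_X\Phi(|f|/\lambda)\,d\mu$ is non-increasing, so $S_f$ is an interval unbounded above; moreover, under the standing hypothesis $\Phi^{-1}\in\Delta_2$, iterating (\ref{Ndelta22}) in the form $\Phi(kx)=2\Phi(x)$ gives $\Phi(k^nx)=2^n\Phi(x)$ for all $x\ge 0$, so for any $\lambda>0$ and $f\in L_\Phi$, picking $n$ with $k^{-n}\le\lambda$ yields $\Phi(|f|/\lambda)\le\Phi(k^n|f|)=2^n\Phi(|f|)\in L^1$; hence $\int_X\Phi(|f|/\lambda)\,d\mu<\infty$ for every $\lambda>0$.

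The easy axioms. For finiteness, $\Phi(k^{-n}x)=2^{-n}\Phi(x)$ gives $\int_X\Phi(|f|/k^n)\,d\mu=2^{-n}\int_X\Phi(|f|)\,d\mu\le 1$ once $2^n\ge\int_X\Phi(|f|)\,d\mu$, so $S_f\neq\emptyset$ and $\|f\|_\Phi<\infty$. For homogeneity, for $t\neq 0$ the change of variable $\lambda=|t|s$ inside the infimum gives $\|tf\|_\Phi=\inf\{|t|s:s\in S_f\}=|t|\,\|f\|_\Phi$, while $\|0\|_\Phi=0$ because $\Phi(0)=0$. For positivity, let $f\neq 0$; by the representation (\ref{N*-function}) with $p>0$ on $(0,\infty)$ we have $\Phi(|f|)>0$ on the positive-measure set $\{f\neq 0\}$, so $c_0:=\int_X\Phi(|f|)\,d\mu\in(0,\infty)$. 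If $\|f\|_\Phi=0$ then $k^{-n}\in S_f$ for every $n$, i.e.\ $2^nc_0=\int_X\Phi(k^n|f|)\,d\mu\le 1$ for all $n$, forcing $c_0=0$, a contradiction; hence $\|f\|_\Phi>0$.

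For the quasi-triangle inequality I claim that $k$, the constant of (\ref{Ndelta22}), serves as modulus of concavity: $\|f+g\|_\Phi\le k\,(\|f\|_\Phi+\|g\|_\Phi)$. First, $f+g\in L_\Phi$ (by (\ref{1.16}) or the Remark). Fix $\lambda_1>\|f\|_\Phi$ and $\lambda_2>\|g\|_\Phi$, so $\int_X\Phi(|f|/\lambda_1)\,d\mu\le 1$ and $\int_X\Phi(|g|/\lambda_2)\,d\mu\le 1$, and put $\lambda:=k(\lambda_1+\lambda_2)$. Using $|f+g|\le|f|+|g|$ and monotonicity of $\Phi$, then the subadditivity estimate (\ref{1.16}), then $\frac{|f|}{k(\lambda_1+\lambda_2)}\le\frac1k\cdot\frac{|f|}{\lambda_1}$ (and symmetrically for $g$) together with (\ref{Ndelta22}) rewritten as $\Phi(\frac1k t)=\frac12\Phi(t)$, one gets
\begin{align*}
\int_X\Phi\Big(\frac{|f+g|}{\lambda}\Big)\,d\mu &\le\int_X\Phi\Big(\frac{1}{k}\cdot\frac{|f|}{\lambda_1}\Big)\,d\mu+\int_X\Phi\Big(\frac{1}{k}\cdot\frac{|g|}{\lambda_2}\Big)\,d\mu\\
&=\frac12\int_X\Phi\Big(\frac{|f|}{\lambda_1}\Big)\,d\mu+\frac12\int_X\Phi\Big(\frac{|g|}{\lambda_2}\Big)\,d\mu\le 1.
\end{align*}
Hence $\lambda\in S_{f+g}$, so $\|f+g\|_\Phi\le k(\lambda_1+\lambda_2)$; letting $\lambda_1\downarrow\|f\|_\Phi$ and $\lambda_2\downarrow\|g\|_\Phi$ gives the claim. (Since $k\ge 2>1$, what we obtain is a genuine quasi-norm, consistent with the assertion that $L_\Phi$ is not a normed space.)

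The main obstacle is precisely this quasi-triangle inequality: concavity of $\Phi$ produces an inequality in the wrong direction for bounding $\int_X\Phi(|f+g|/\lambda)$ from above, and the subadditivity of $\Phi$ alone only yields the constant $2$ in place of the required $1$; it is the $\Delta_2$-type relation (\ref{Ndelta22}) — which converts a dilation by $1/k$ into the factor $1/2$ — that closes this gap and simultaneously pins down $k$ as the modulus of concavity. A secondary point requiring (minor) care is the same relation's role in making $\int_X\Phi(|f|/\lambda)\,d\mu$ finite for all $\lambda>0$ and in producing the strict lower bound $\|f\|_\Phi>0$; the monotonicity/interval structure of $S_f$ and the passages to the limit in $\lambda_1,\lambda_2$ are routine.
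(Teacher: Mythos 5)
Your proposal is correct and follows essentially the same route as the paper: both prove separation by iterating the identity $2\Phi(x)=\Phi(kx)$ to force $\int_X\Phi(|f|)\,d\mu=0$, both get homogeneity by the change of variable $\lambda=|t|s$, and both obtain the quasi-triangle inequality with modulus $k$ by testing $\lambda=k(\lambda_1+\lambda_2)$ and combining the dilation identity $\Phi(t/k)=\tfrac12\Phi(t)$ with the subadditivity and monotonicity of $\Phi$. Your version is slightly more careful on two routine points the paper glosses over (finiteness of $\|f\|_\Phi$, and taking $\lambda_1,\lambda_2$ strictly above the infima before passing to the limit), but the underlying argument is the same.
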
 
   
 \begin{proof}
  \begin{itemize}
  \item[$1)$] If $f=0,$ then $\displaystyle\Phi(\frac{|f|}{\lambda})=0 \ \forall \lambda >0,$ so $\|f\|_\Phi=0.$  
  \item[$2)$] If $\|f\|_\Phi=0$ then $\displaystyle\int_X\Phi(\alpha|f|)d\mu \leq 1, \ \forall \alpha >0.$
  \end{itemize}  
We have $\displaystyle\int_X\Phi(|f|)d\mu=\frac{1}{2}\int_X\Phi(k|f|)d\mu=...=\frac{1}{2^n}\int_X\Phi(k^n|f|)d\mu \ \forall n\in \mathbb{N},$ by $2)$, then $\displaystyle\int_X\Phi(|f|)d\mu\leq \frac{1}{2^n} \ \forall n\in \mathbb{N}.$ While $\displaystyle\frac{1}{2^n}\rightarrow 0 \ \mbox{as} \ n\rightarrow +\infty,$ that is $\int_X\Phi(|f|)d\mu=0,$ so  $f=0.$\\

Let $\alpha$ and $f\in L_\Phi,$ then 
\begin{itemize}
\item If $\alpha=0,$ then $\|\alpha f \|_\Phi=\|0\|_\Phi=0=0\|f\|_\Phi.$

\item If $\alpha\neq0,$ then $\|\alpha f \|_\Phi=\displaystyle\inf\{ \lambda >0, \int_X \Phi(\frac{\alpha f}{\lambda})d\mu <1 \}\\
\hspace*{3.2cm}=\displaystyle\inf\{ |\alpha|\frac{\lambda}{|\alpha|}, \int_X \Phi(\frac{ f}{\frac{\lambda}{|\alpha|}})d\mu <1 \}\\
\hspace*{3.2cm}=\displaystyle|\alpha|\inf\{ \frac{\lambda}{|\alpha|}>0 , \int_X \Phi(\frac{ f}{\frac{\lambda}{|\alpha|}})d\mu <1 \}\\
\hspace*{3.2cm}=|\alpha|\|f\|_\Phi.$
\end{itemize}   
   
Let $f,g\in L_\Phi$ and $a,b \in \mathbb{R},$ such that: $a\geq \|f\|_\Phi$ and $b \geq \|g\|_\Phi,$ then:
$$\int_X \Phi(\frac{|f|}{a})d\mu<1 \ \mbox{and} \ \int_X \Phi(\frac{|g|}{b})d\mu<1,$$
since $\displaystyle\int_X \Phi\big(\frac{|f+g|}{k(a+b)}\big)d\mu=\frac{1}{2}\int_X \Phi\big(\frac{k|f+g|}{k(a+b)}\big)d\mu=\frac{1}{2}\int_X \Phi\big(\frac{|f+g|}{(a+b)}\big)d\mu\\
\hspace*{3.5cm}\leq\frac{1}{2}\big[\int_X \Phi\big(\frac{|f|}{(a+b)}\big)d\mu + \int_X \Phi\big(\frac{|g|}{(a+b)}\big)d\mu\big]\\
%\hspace*{3.5cm}\leq\frac{1}{2}\big[\int_X \Phi\big(\frac{a}{a+b}\frac{|f|}{a}\big)d\mu + \int_X \Phi\big(\frac{b}{a+b}\frac{|g|}{b}\big)d\mu\big]\\
\hspace*{3.5cm}\leq\frac{1}{2}\big[\int_X \Phi\big(\frac{|f|}{a}\big)d\mu + \int_X \Phi\big(\frac{|g|}{b}\big)d\mu\big]
%\hspace*{3.5cm}\leq\frac{1}{2}\big[1 + 1]\\
\leq 1,$\\
which means that $\|f+g\|_\Phi\leq k(a+b).$ As a result, 
$\|f+g\|\leq \big(\|f\|_\Phi+\|g\|_\Phi\big).$
 \end{proof}  
\begin{rem}
As is well known, whenever $0<p<1$ the function $\|f\|_p=\big(\int_X |f|^p d\mu\big)^{\frac{1}{p}}$ no longer satisfies the triangle inequality  $\|f_1+f_2\|_p\leq \|f_1\|_p+\|f_2\|_p$ but in general only the weaker condition $\|f_1+f_2\|_p\leq 2^{\frac{1-p}{p}}(\|f_1\|_p+\|f_2\|_p).$ Then the function $\|.\|_p$ is a quasi-norm on $L^p,$ and coincide with the quasi-norm $\|.\|_\Phi.$ Indeed, since $\Phi(t)=t^p$ is $N^*-$function, $2\Phi(t)=(2^{1/p})^p t^p=(2^{1/p}.t)^p,$ for all $t\geq 0.$ Set $k=2^{1/p}\geq 2.$
$$\int_X \big(\frac{|f|}{\|f\|_\Phi}\big)^p d\mu=\int_X \frac{|f|^p}{\|f\|_\Phi}^p d\mu$$ 
then $\big[\int_X |f|^pd\mu\big]^{1/p}\leq \|f\|_\Phi $ for all $f\in L_\Phi.$
So that $ \|f\|_p\leq \|f\|_\Phi.$\\
On the other hand, we have $\int_X \big(\frac{|f|}{\|f\|_\Phi}\big)^p d\mu=\frac{1}{\|f\|^p_p}\int_X |f|^pd\mu=1.$ So $ \|f\|_\Phi\leq \|f\|_p.$ Finally,  we conclude that $ \|f\|_p= \|f\|_\Phi.$ 

\end{rem}      
   
\begin{prop}
Let $\mu(X)<+\infty$ and $f\in L^1,$ then 
\begin{equation}
\|f\|_\Phi\leq \frac{1}{\mu(X)\Phi^{-1}(\frac{1}{\mu(X)})}\|f\|_1.
\end{equation}
\end{prop}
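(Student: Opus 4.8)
The plan is to show that the single value
\[
\lambda_0:=\frac{\|f\|_1}{\mu(X)\,\Phi^{-1}\!\big(1/\mu(X)\big)}
\]
belongs to the set over which the infimum defining $\|f\|_\Phi$ is taken; the asserted inequality $\|f\|_\Phi\le\lambda_0$ is then immediate from the definition of the infimum. So the whole proof reduces to checking that $\int_X\Phi\big(|f|/\lambda_0\big)\,d\mu\le 1$.

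First I would dispose of the trivial case $f=0$ almost everywhere, in which both sides are $0$. Assuming $f\not\equiv 0$, note that $\lambda_0>0$: since $0<\mu(X)<+\infty$ we have $1/\mu(X)>0$, hence $\Phi^{-1}(1/\mu(X))>0$, and $\|f\|_1>0$. Because $f\in L^1(X)$ and $L^1(X)\subset L_\Phi(X)$ (the continuous inclusion recorded just after Proposition \ref{Ojens}), and because $|f|/\lambda_0$ again lies in $L_\Phi(X)$ (evenness of $\Phi$ reduces this to integrability of $\Phi(|f|/\lambda_0)$, handled below), Proposition \ref{Ojens} applies to the nonnegative function $g=|f|/\lambda_0$ and gives, after multiplying through by $\mu(X)$,
\[
\int_X\Phi\!\left(\frac{|f|}{\lambda_0}\right)d\mu\ \le\ \mu(X)\,\Phi\!\left(\frac{1}{\mu(X)}\int_X\frac{|f|}{\lambda_0}\,d\mu\right)\ =\ \mu(X)\,\Phi\!\left(\frac{\|f\|_1}{\mu(X)\,\lambda_0}\right).
\]
By the very definition of $\lambda_0$ one has $\dfrac{\|f\|_1}{\mu(X)\,\lambda_0}=\Phi^{-1}\!\big(1/\mu(X)\big)$, so the right-hand side equals $\mu(X)\,\Phi\big(\Phi^{-1}(1/\mu(X))\big)=\mu(X)\cdot\frac{1}{\mu(X)}=1$. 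Hence $\int_X\Phi(|f|/\lambda_0)\,d\mu\le 1$, so $\lambda_0$ is an admissible value and $\|f\|_\Phi\le\lambda_0$, which is the claim.

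The only point requiring care is the verification that $|f|/\lambda_0\in L_\Phi(X)$, i.e.\ that finiteness of $\int_X\Phi(|f|)\,d\mu$ survives multiplication of the argument by the possibly large constant $1/\lambda_0$; this is where the standing hypothesis $\Phi^{-1}\in\Delta_2$ enters. I would iterate the relation (\ref{Ndelta22}), $\Phi(kx)=2\Phi(x)$, to get $\Phi(k^n x)=2^n\Phi(x)$ for all $n\in\mathbb{N}$ and $x\ge 0$; choosing $n$ with $k^n\ge 1/\lambda_0$ and using that $\Phi$ is non-decreasing on $[0,+\infty)$ yields the pointwise bound $\Phi(|f|/\lambda_0)\le 2^n\Phi(|f|)$, hence $\int_X\Phi(|f|/\lambda_0)\,d\mu\le 2^n\int_X\Phi(|f|)\,d\mu<+\infty$. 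Everything else is a direct substitution into the definition of $\|\cdot\|_\Phi$ and of $\Phi^{-1}$.
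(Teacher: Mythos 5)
Your proposal is correct and follows essentially the same route as the paper: both test the single candidate $\lambda_0=\|f\|_1/\big(\mu(X)\,\Phi^{-1}(1/\mu(X))\big)$ in the Luxemburg-type infimum and verify $\int_X\Phi(|f|/\lambda_0)\,d\mu\le 1$ via the reverse Jensen inequality for the concave $N^*$-function (the paper simply re-derives that inequality inline through the affine upper envelope $\Phi(x)=\inf_n(a_nx+b_n)$, whereas you cite Proposition \ref{Ojens} directly). Your extra integrability check is harmless but unnecessary, since $|f|/\lambda_0\in L^1(X)\subset L_\Phi(X)$ already.
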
   
  \begin{proof}
  \begin{eqnarray*}
  \int_X\Phi\big(\frac{|f|}{\|f\|_1}\mu(X)\Phi^{-1}(\frac{1}{\mu(X)})d\mu\big)&=&\int_X\big[\inf_n \{a_n \frac{|f|}{\|f\|_1}\mu(X)\Phi^{-1}(\frac{1}{\mu(X)})+b_n \}   \big]d\mu\\
  &\leq& \inf_n \{a_n \int_X \frac{|f|}{\|f\|_1}\mu(X)\Phi^{-1}(\frac{1}{\mu(X)})d\mu+b_n \mu(X) \}\\
 &\leq& \mu(X)\big[\inf_n \{a_n \int_X \frac{|f|}{\|f\|_1}\Phi^{-1}(\frac{1}{\mu(X)})d\mu+b_n  \}\big]\\
 &\leq& \mu(X)\Phi\big( \frac{1}{\|f\|_1}(\int_X |f|d\mu)\Phi^{-1}(\frac{1}{\mu(X)})\big)\\
  &\leq& \mu(X)\Phi\big(\Phi^{-1}(\frac{1}{\mu(X)})\big)\\
  &\leq& \mu(X)\frac{1}{\mu(X)}=1.
  \end{eqnarray*}
 
Then 
\begin{equation*}
\|f\|_\Phi\leq \frac{1}{\mu(X)\Phi^{-1}(\frac{1}{\mu(X)})}\|f\|_1.
\end{equation*}
   
   \end{proof} 

\begin{prop}
Let $f\in L_\Phi.$ If $\displaystyle\int_X \Phi(|f|)d\mu<c$ then $\displaystyle\|f\|_\Phi \leq k^{n_0},$ where $\displaystyle n_0=[\frac{\ln(c)}{\ln(2)}]+1.$
\end{prop}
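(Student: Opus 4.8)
The plan is to exploit the functional identity $2\Phi(x)=\Phi(kx)$, valid for all $x>0$ under the standing hypothesis $\Phi^{-1}\in\Delta_2$ (this is precisely equation (\ref{Ndelta22})). Replacing $x$ by $x/k$ gives $\Phi(x/k)=\tfrac12\Phi(x)$, and a straightforward induction — applying (\ref{Ndelta22}) with $x$ replaced by $x/k^{\,j}$ at the $j$-th step, and using $\Phi(0)=0$ for the trivial case $x=0$ — yields
\[
\Phi\Big(\frac{x}{k^{n}}\Big)=\frac{1}{2^{n}}\,\Phi(x)\qquad\text{for all }x\ge 0,\ n\in\mathbb{N}.
\]
Since this holds pointwise, it passes under the integral sign, so for the given $f\in L_\Phi$ and every $n\in\mathbb{N}$,
\[
\int_X\Phi\Big(\frac{|f|}{k^{n}}\Big)\,d\mu=\frac{1}{2^{n}}\int_X\Phi(|f|)\,d\mu<\frac{c}{2^{n}}.
\]

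Next I would choose $n$ large enough to absorb the constant $c$. Because $n_0=\big[\tfrac{\ln c}{\ln 2}\big]+1>\tfrac{\ln c}{\ln 2}$, we have $2^{n_0}>c$, hence $c/2^{n_0}<1$, and therefore
\[
\int_X\Phi\Big(\frac{|f|}{k^{n_0}}\Big)\,d\mu<1 ,
\]
and a fortiori $\int_X\Phi(|f|/k^{n_0})\,d\mu\le 1$. Thus $\lambda=k^{n_0}$ lies in the set $\{\lambda>0:\int_X\Phi(|f|/\lambda)\,d\mu\le 1\}$ over which the infimum defining $\|f\|_\Phi$ is taken, and consequently $\|f\|_\Phi\le k^{n_0}$, which is the assertion.

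I do not expect a genuine obstacle: once (\ref{Ndelta22}) is in force the proof is a one-line modular computation together with the choice of $n_0$, in the same spirit as the computation used in the quasi-norm proposition above. The only points worth spelling out are the induction that produces $\Phi(x/k^{n})=2^{-n}\Phi(x)$ (each step is an instance of (\ref{Ndelta22}), and $k\ge 2$ guarantees that $x/k^{n}$ remains in $[0,\infty)$, where $\Phi$ is described by its representation), and the harmless remark that the statement is substantive only when $c\ge 1$: if $c\le 1$ then $\int_X\Phi(|f|)\,d\mu<c\le 1$ already gives $\|f\|_\Phi\le 1$, whereas for $c\ge 1$ one has $n_0\ge 1$ and $k^{n_0}\ge k\ge 2$, so the bound is consistent and non-trivial.
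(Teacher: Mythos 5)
Your proof is correct and follows the same route as the paper: both iterate the identity $2\Phi(x)=\Phi(kx)$ from (\ref{Ndelta22}) to obtain $\int_X\Phi(|f|/k^{n_0})\,d\mu=2^{-n_0}\int_X\Phi(|f|)\,d\mu<c/2^{n_0}<1$ and then read off the bound from the definition of $\|\cdot\|_\Phi$. Your added remarks (the explicit induction and the observation about $c\le 1$) are harmless elaborations of the same one-line computation.
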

 \begin{proof} We have 
 \begin{eqnarray*}
 \int_X \Phi(\frac{|f|}{k^{n_0}})d\mu&=&\frac{1}{2^{n_0}}\int_X \Phi(\frac{|f|k^{n_0}}{k^{n_0}})d\mu\\
 &=& \frac{1}{2^{n_0}}\int_X \Phi(|f|)d\mu \leq \frac{c}{2^{n_0}}; 
 \end{eqnarray*}
 since $\displaystyle n_0>\frac{\ln(c)}{\ln(2)},$ then $\displaystyle e^{\ln(2)n_0}>c$ so that $2^{n_0}>c$ which gives $\displaystyle\frac{c}{2^{n_0}}<1,$ we get $\displaystyle\|f\|_\Phi \leq k^{n_0}.$
 \end{proof}

 \begin{defn}
 \begin{enumerate}
 \item A sequence $\{f_n\}$ in $L_\Phi (X)$ is called a quasi-convergent to a point $f\in L_\Phi$ if and only if $\|f_n-f\|_\Phi\rightarrow 0$  and we note $\xymatrix{f_n\ar[r]^{\|.\|_\Phi}&f} \ \mbox{as}\ n\rightarrow \infty.$
 \item A sequence $\{f_n\}$ in $L_\Phi (X)$ is quasi-cauchy sequence if and only if $\|f_n-f_m\|_\Phi \rightarrow 0$ as $n,m\rightarrow \infty.$
 \end{enumerate}
 Is clear that every a quasi-convergent sequence is a quasi-cauchy sequence.
 \end{defn}  
   \begin{prop}\label{prop2}
   Let $\{f_n\}_{n\in \mathbb{N}}$ be a sequence in $L_\Phi(X)$ and $f\in L_\Phi(X).$ Then 
   $\xymatrix{f_n\ar[r]^{\|.\|_\Phi}&f}$ if and only if $\xymatrix{f_n\ar[r]^{d_\Phi}&f.}$
     \end{prop}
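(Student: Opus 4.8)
The plan is to establish the equivalence of the two notions of convergence by comparing, for each fixed function $h=f_n-f\in L_\Phi$, the two quantities $d_\Phi(h,0)=\int_X\Phi(|h|)\,d\mu$ and $\|h\|_\Phi=\inf\{\lambda>0:\int_X\Phi(|h|/\lambda)\,d\mu\le 1\}$. The key tool is the scaling identity $\int_X\Phi(k^m|h|)\,d\mu=2^m\int_X\Phi(|h|)\,d\mu$ for every $m\in\Z$ (negative exponents included), which follows by iterating the defining relation $2\Phi(x)=\Phi(kx)$ from \eqref{Ndelta22} — here $k\ge 2$ is the constant of the $\Delta_2$-type assumption. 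This identity lets me control the modular $\int_X\Phi(|h|/\lambda)\,d\mu$ by a dyadic multiple of $\int_X\Phi(|h|)\,d\mu$ and vice versa, which is exactly what converts statements about one quantity tending to $0$ into statements about the other.

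\textbf{Direction 1: $d_\Phi(f_n,f)\to 0 \implies \|f_n-f\|_\Phi\to 0$.} Write $h_n=f_n-f$ and $a_n=\int_X\Phi(|h_n|)\,d\mu\to 0$. Fix $\eps>0$ and pick $m\in\N$ with $k^{-m}<\eps$. For $n$ large enough that $a_n\le 2^{-m}$, the scaling identity gives $\int_X\Phi(k^m|h_n|)\,d\mu = 2^m a_n\le 1$, i.e.\ $\int_X\Phi(|h_n|/k^{-m})\,d\mu\le 1$, whence $\|h_n\|_\Phi\le k^{-m}<\eps$. This already shows $\|h_n\|_\Phi\to 0$. (This is essentially the mechanism of the Proposition proved just above, with $c$ chosen small.)

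\textbf{Direction 2: $\|f_n-f\|_\Phi\to 0 \implies d_\Phi(f_n,f)\to 0$.} Again set $h_n=f_n-f$ and suppose $\|h_n\|_\Phi\to 0$. Fix $\eps>0$ and pick $m\in\N$ with $2^{-m}<\eps$. For $n$ large, $\|h_n\|_\Phi\le k^{-m}$, so by definition of the infimum there is $\lambda_n\le k^{-m+1}=k\cdot k^{-m}$ (or simply take any $\lambda_n$ slightly above $\|h_n\|_\Phi$, then shrink) with $\int_X\Phi(|h_n|/\lambda_n)\,d\mu\le 1$; since $\Phi$ is non-decreasing on $[0,\infty)$ and $\lambda_n\le k^{-m+1}$, we get $\int_X\Phi(k^{m-1}|h_n|)\,d\mu\le\int_X\Phi(|h_n|/\lambda_n)\,d\mu\le 1$, hence $d_\Phi(h_n,0)=\int_X\Phi(|h_n|)\,d\mu = 2^{-(m-1)}\int_X\Phi(k^{m-1}|h_n|)\,d\mu\le 2^{-(m-1)}\le 2^{-m}\cdot 2<\ldots$ — I'll tidy the constant by choosing $m$ one larger so that the bound is $<\eps$. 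Thus $d_\Phi(h_n,0)\to 0$, i.e.\ $f_n\xrightarrow{d_\Phi}f$.

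\textbf{The main obstacle} is bookkeeping with the constant $k$: since $k$ can be strictly larger than $2$, the relation $\|h\|_\Phi\le k^{-m}$ does not immediately give $\int_X\Phi(k^m|h|)\,d\mu\le 1$ — one loses a factor because the infimum defining $\|h\|_\Phi$ need not be attained and because monotonicity of $\Phi$ only yields an inequality when the scaling factor is a power of $k$. The clean fix is to always round the threshold to the nearest power of $k$: given $\|h\|_\Phi<k^{-m}$, choose $\lambda\in(\|h\|_\Phi,k^{-m})$, so $\int_X\Phi(|h|/\lambda)\,d\mu\le 1$, and then $\int_X\Phi(k^{m}|h|)\,d\mu\le\int_X\Phi(|h|/\lambda)\,d\mu\le 1$ because $k^m\le 1/\lambda$ and $\Phi$ is non-decreasing; conversely, given $\int_X\Phi(|h|)\,d\mu\le 2^{-m}$, the scaling identity gives $\int_X\Phi(k^m|h|)\,d\mu\le 1$ exactly, so $\|h\|_\Phi\le k^{-m}$. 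With this rounding convention both implications become a matter of matching an $\eps$ to a suitable $m$, and no genuine difficulty remains. I would also remark at the start that the reverse implication (quasi-convergent $\Rightarrow$ quasi-Cauchy, and the analogous statement for $d_\Phi$) is already recorded, so the Proposition really just transports one metric-type convergence to the other.
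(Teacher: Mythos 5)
Your proof is correct and follows essentially the same route as the paper: both directions rest on iterating the identity $2\Phi(x)=\Phi(kx)$ to convert between the modular $\int_X\Phi(|f_n-f|)\,d\mu$ and the quasi-norm $\|f_n-f\|_\Phi$ at the matched scales $k^{\pm m}$ and $2^{\pm m}$ (the paper's reverse direction simply invokes its preceding proposition with $c=2^{-i}$, which is your Direction~1 with the constant made explicit). Your handling of the constant $k$ --- rounding to powers of $k$ and accounting for the infimum not being attained --- is actually more careful than the paper's displayed computation, but the underlying mechanism is identical, so there is nothing further to add.
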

 \begin{proof}
 $\Rightarrow]$ Assume that $\xymatrix{f_n\ar[r]^{\|.\|_\Phi}&f.}$ Then, for all $i\in \mathbb{N},$ there exists $N_i,$ for all $n>N_i,$  $\|f_n-f\|_\Phi\leq \frac{1}{k^i}.$ This implies that : 
 \begin{equation*}
 2^i \int_X \Phi(|f_n-f|)d\mu=\int_X \Phi(k^n|f_n-f|)d\mu=\int_X \Phi(\frac{|f_n-f|}{k})d\mu<1. 
 \end{equation*}
 
 Hence, 
 \begin{equation*}
 d_\Phi(f_n,f)= \int_X \Phi(|f_n-f|)d\mu\leq \frac{1}{2^i}.
 \end{equation*}
 Thus $\xymatrix{f_n\ar[r]^{d_\Phi}&f.}$ \\
 $\Leftarrow]$ Assume that $\xymatrix{f_n\ar[r]^{d_\Phi}&f.}$ Then, for all $i\in \mathbb{N},$ there exists $N_i,$ for all $n>N_i,$  $ \int_X \Phi(f_n-f)d\mu\leq \frac{1}{2^i}.$ 
 By the above proposition  $\|f_n-f\|_\Phi\leq k^{n_0(i)},$ where $n_0(i)=[\frac{\ln(\frac{1}{2^i})}{\ln(2)}]+1.$ We have $\|f_n-f\|_\Phi\leq k^{\frac{\ln(\frac{1}{2^i})}{\ln(2)}+1}\leq k^{-i\frac{\ln(2)}{\ln(2)}+1}\leq \frac{1}{k^{i-1}}.$
 Thus $\xymatrix{f_n\ar[r]^{\|.\|_\Phi}&f.}$
 \end{proof}
We conclude this section  by the following result:
 
 \begin{thm}
 $L_\Phi(X)$ is a quasi-Banach space.
 \end{thm}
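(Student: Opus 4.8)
The plan is to combine the two facts already established in this section: first, that $(L_\Phi, d_\Phi)$ is a complete metric space (proved earlier as a theorem), and second, that the quasi-norm $\|\cdot\|_\Phi$ induces the same convergence as the metric $d_\Phi$ (Proposition \ref{prop2}). Since a quasi-Banach space is by definition a vector space equipped with a quasi-norm that is complete with respect to the topology the quasi-norm defines, it suffices to transfer the completeness of $d_\Phi$ across the equivalence of the two notions of convergence.

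First I would recall that we have already shown $\|\cdot\|_\Phi$ is a quasi-norm on $L_\Phi$, so $(L_\Phi,\|\cdot\|_\Phi)$ is a quasi-normed space; what remains is completeness. Let $\{f_n\}$ be a quasi-Cauchy sequence, i.e. $\|f_n-f_m\|_\Phi\to 0$ as $n,m\to\infty$. I would argue that it is then a Cauchy sequence for $d_\Phi$: this follows by the same estimate used in the forward direction of Proposition \ref{prop2}, namely that $\|g\|_\Phi\le k^{-i}$ forces $\int_X\Phi(|g|)\,d\mu\le 2^{-i}$ (using the $\Delta_2$-type relation $2\Phi(x)=\Phi(kx)$), applied to $g=f_n-f_m$. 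Hence $d_\Phi(f_n,f_m)\to 0$. By the completeness theorem for $(L_\Phi,d_\Phi)$, there is $f\in L_\Phi$ with $d_\Phi(f_n,f)\to 0$. Then Proposition \ref{prop2} gives $\|f_n-f\|_\Phi\to 0$, i.e. the quasi-Cauchy sequence converges in quasi-norm to an element of $L_\Phi$. Therefore $(L_\Phi,\|\cdot\|_\Phi)$ is complete, hence a quasi-Banach space.

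The only genuine subtlety, and the step I would be most careful about, is making sure that a quasi-Cauchy sequence in the quasi-norm really does produce a $d_\Phi$-Cauchy sequence with the right uniformity — one has to extract from $\|f_n-f_m\|_\Phi\to 0$ a bound of the form $\int_X\Phi(|f_n-f_m|)\,d\mu\to 0$, and this relies on iterating $2\Phi(x)=\Phi(kx)$ exactly as in Proposition \ref{prop2}, rather than on any homogeneity of $\int_X\Phi$. Everything else is bookkeeping: the vector-space structure and the quasi-norm axioms are already in hand, and completeness is imported wholesale from the metric setting. I would keep the write-up short, essentially: ``By Proposition \ref{prop2} the quasi-norm topology and the $d_\Phi$ topology coincide; since $(L_\Phi,d_\Phi)$ is complete and $\|\cdot\|_\Phi$ is a quasi-norm, $(L_\Phi,\|\cdot\|_\Phi)$ is a quasi-Banach space,'' perhaps with the one-line Cauchy-transfer estimate spelled out for completeness.
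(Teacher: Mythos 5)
Your proposal is correct and follows exactly the paper's own argument: the paper also deduces the result from Proposition \ref{prop2} together with the completeness of $(L_\Phi,d_\Phi)$, only stating it as ``obvious.'' Your version merely spells out the Cauchy-transfer step that the paper leaves implicit.
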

 
 \begin{proof}
 Is obvious by proposition \ref{prop2}, as $(L_\Phi, d_\Phi)$ is complet space.
 \end{proof}

\section{\textbf{Linear form of $L_\Phi$ }}
In this last section we present the linear forms on $L_\Phi$ by discussing each case of the measure. \\
 %We first present the following result

\textbf{Case 1:}  
Let $\mu(X)<+\infty$
\begin{lem}\cite{D} If $\Lambda \neq \emptyset, $ there exists a finite or countable sequence of sets $\{A_i \}\subset \mathcal{U}$ such that every $A \in \mathcal{U}$ differs from just one $\{A_i \}$ only by sets of measure zero. 
\end{lem}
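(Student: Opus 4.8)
The plan is to sort the atoms into ``essential equality'' classes and to show that only countably many such classes can occur. Write $\mathcal{U}$ for the family of atoms, i.e. $\mathcal{U}=\Lambda$, and define on $\mathcal{U}$ the relation $A\sim B\iff\mu(A\triangle B)=0$, where $A\triangle B=(A\setminus B)\cup(B\setminus A)$; this is plainly an equivalence relation. The structural input is property (3) of the atoms recalled above: if $A_1\in\Lambda$ and $A_2\in\mathcal{M}$, then $\mu(A_1\cap A_2)\in\{0,\mu(A_1)\}$. First I would apply this to two atoms $A_1,A_2$, and then again with their roles exchanged, to obtain the dichotomy: either $\mu(A_1\cap A_2)=0$, in which case $\mu(A_1\triangle A_2)=\mu(A_1)+\mu(A_2)>0$, or $\mu(A_1\cap A_2)=\mu(A_1)=\mu(A_2)$, in which case $\mu(A_1\triangle A_2)=0$, that is $A_1\sim A_2$. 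Consequently atoms from distinct $\sim$-classes are essentially disjoint, and all atoms in one class share a single positive value of $\mu$, which I will call the measure of that class.

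Next I would bound the number of classes using $\mu(X)<+\infty$. For each $n\in\mathbb{N}$ let $\mathcal{C}_n$ be the family of classes whose common measure exceeds $1/n$. If $C_1,\dots,C_k\in\mathcal{C}_n$ are distinct, pick representatives $A_1,\dots,A_k$; by the dichotomy they are pairwise essentially disjoint, so $\mu\big(\bigcup_{j=1}^{k}A_j\big)=\sum_{j=1}^{k}\mu(A_j)>k/n$, whence $k<n\,\mu(X)$. Thus each $\mathcal{C}_n$ is finite. Since every class has positive measure it lies in some $\mathcal{C}_n$, so the collection of all $\sim$-classes equals $\bigcup_{n\geq 1}\mathcal{C}_n$ and is therefore finite or countable.

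Finally I would choose one atom $A_i$ from each $\sim$-class, producing the desired finite or countable sequence $\{A_i\}\subset\mathcal{U}$. An arbitrary atom $A$ belongs to exactly one class, so $\mu(A\triangle A_i)=0$ for the corresponding index $i$; and if also $\mu(A\triangle A_j)=0$, then $\mu(A_i\triangle A_j)=0$ puts $A_i$ and $A_j$ in the same class, forcing $i=j$. Hence $A$ differs from just one of the $A_i$ by a set of measure zero, which is the assertion. I expect the only delicate step to be the symmetric use of property (3) to pin down the clean dichotomy, together with the ensuing well-definedness of a class's measure; granting that, the counting via $\sigma$-additivity and finiteness of $\mu(X)$ is routine, and the same argument goes through verbatim when $\mu$ is merely $\sigma$-finite.
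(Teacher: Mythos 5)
Your argument is correct, and since the paper gives no proof of this lemma (it is quoted from Day's paper \cite{D}), your write-up in fact supplies the standard argument that Day uses: the symmetric application of property (3) of atoms yields the dichotomy $\mu(A_1\triangle A_2)=0$ or $\mu(A_1\cap A_2)=0$, the classes of measure exceeding $1/n$ are at most $n\,\mu(X)$ in number, and a choice of representatives gives the countable family. The only hypotheses you rely on, finiteness of $\mu(X)$ (used both for the counting bound and to subtract measures when deriving $\mu(A_1\triangle A_2)=0$), are exactly those in force in the case where the lemma is invoked, so nothing is missing.
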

In what follows we shall let $\{A_i \}$ be the family of sets  of $\Lambda$ of the preceding lemma, let $a_i=\mu(A_i)$ 
and let $\overline{f}_i$ be the value of the measurable function $f$ almost everywhere on $\{A_i \}.$ Then we have
\begin{thm}\label{thm4.2}
Any linear functional on $L_\Phi(X)$ is identically zero, if $\Lambda=\emptyset,$ or can be expressed in the form 
$U(f)=\sum_{i}u_i \overline{f}_i$ where $\|U\|= \sup_i |u_i|\Phi^{-1}(\frac{1}{a_i}).$\\
 If $u_i$ are given so that $\displaystyle|u_i|<k \frac{1}{\Phi^{-1}(\frac{1}{a_i})}$ for all $i,$ the function $U:f\mapsto \sum_i u_i \overline{f}_i $ is linear on $L_\Phi(X).$ 
\end{thm}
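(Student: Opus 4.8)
The plan is to decompose $X=X_0\cup\bigcup_i A_i$ into its non-atomic part $X_0:=X\setminus\bigcup_i A_i$ and its atoms, to prove that every (continuous) linear functional $U$ annihilates every element of $L_\Phi(X)$ supported on $X_0$, and then to recover both the representation and the norm from the values of $U$ on the characteristic functions $\mathbf{1}_{A_i}$ (which lie in $L_\Phi$ since $\mu(X)<\infty$ gives $\int_X\Phi(\mathbf{1}_{A_i})\,d\mu=\Phi(1)a_i<\infty$). Continuity is understood for $d_\Phi$, equivalently for $\|\cdot\|_\Phi$ by Proposition~\ref{prop2}.

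\emph{Step 1 (vanishing on the non-atomic part).} I would show that if $g\in L_\Phi(X)$ vanishes a.e.\ outside $X_0$, then $U(g)=0$. The finite measure $E\mapsto\int_E\Phi(|g|)\,d\mu$ is absolutely continuous with respect to $\mu|_{X_0}$, hence non-atomic, so the support of $g$ can be split, for each $n$, into $E_1,\dots,E_n$ with $\int_{E_j}\Phi(|g|)\,d\mu=\frac1n\int_X\Phi(|g|)\,d\mu$. Setting $g_j=g\mathbf{1}_{E_j}$ we have $g=\sum_{j=1}^n g_j$, and the Proposition asserting that $\int_X\Phi(|h|)\,d\mu<c$ implies $\|h\|_\Phi\le k^{[\ln c/\ln 2]+1}$ (applied with $c=2n^{-1}\int_X\Phi(|g|)\,d\mu$) gives $\|g_j\|_\Phi\le\kappa\,n^{-\log_2 k}$ for a constant $\kappa=\kappa(g,k)$. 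Therefore
\[
|U(g)|\le\sum_{j=1}^n|U(g_j)|\le\|U\|\sum_{j=1}^n\|g_j\|_\Phi\le\|U\|\,\kappa\,n^{\,1-\log_2 k}.
\]
Since $\Phi$ is concave with $\Phi(0)=0$, the identity $2\Phi(x)=\Phi(kx)$ forces $k\ge2$, and $k=2$ would make $\Phi$ linear, contradicting (\ref{1.14}); hence $k>2$, $\log_2 k>1$, and letting $n\to\infty$ yields $U(g)=0$. In particular, if $\Lambda=\emptyset$ then $X_0=X$ and $U\equiv0$, which is the first assertion.

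\emph{Step 2 (representation and norm when $\Lambda\neq\emptyset$).} Put $u_i:=U(\mathbf{1}_{A_i})$. A direct computation gives $\|\mathbf{1}_{A_i}\|_\Phi=1/\Phi^{-1}(1/a_i)$, so $|u_i|\le\|U\|/\Phi^{-1}(1/a_i)$, i.e.\ $\sup_i|u_i|\Phi^{-1}(1/a_i)\le\|U\|$. For $f\in L_\Phi$ write $f=f\mathbf{1}_{X_0}+\sum_i\overline{f}_i\mathbf{1}_{A_i}$; the partial sums $S_N=f\mathbf{1}_{X_0}+\sum_{i\le N}\overline{f}_i\mathbf{1}_{A_i}$ converge to $f$ in $d_\Phi$ by dominated convergence ($\Phi(|f-S_N|)\le\Phi(|f|)\in L^1$ and $f-S_N\to0$ pointwise), hence in $\|\cdot\|_\Phi$ by Proposition~\ref{prop2}. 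Continuity of $U$, its linearity on the finite sums, and Step~1 applied to $f\mathbf{1}_{X_0}$ give $U(f)=\lim_N U(S_N)=\sum_i u_i\overline{f}_i$. For the reverse norm inequality, set $C:=\sup_i|u_i|\Phi^{-1}(1/a_i)$ and fix $\lambda>\|f\|_\Phi$, so $\sum_i a_i\Phi(|\overline{f}_i|/\lambda)\le\int_X\Phi(|f|/\lambda)\,d\mu\le1$; writing $s_i:=a_i\Phi(|\overline{f}_i|/\lambda)\in[0,1]$ with $\sum_i s_i\le1$, convexity of $\Phi^{-1}$ and $\Phi^{-1}(0)=0$ give $|\overline{f}_i|/\lambda=\Phi^{-1}(s_i/a_i)\le s_i\Phi^{-1}(1/a_i)$, whence
\[
|U(f)|\le\sum_i|u_i|\,|\overline{f}_i|\le\lambda\sum_i s_i\,|u_i|\Phi^{-1}(1/a_i)\le\lambda C .
\]
Letting $\lambda\downarrow\|f\|_\Phi$ gives $|U(f)|\le C\|f\|_\Phi$, so $\|U\|=C=\sup_i|u_i|\Phi^{-1}(1/a_i)$.

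\emph{Step 3 (converse).} If $|u_i|<k/\Phi^{-1}(1/a_i)$ for all $i$, then $\sup_i|u_i|\Phi^{-1}(1/a_i)\le k$, and the estimate of Step~2 (with $C$ replaced by $k$) shows $\sum_i|u_i|\,|\overline{f}_i|\le k\|f\|_\Phi<\infty$ for every $f\in L_\Phi$, so $U(f):=\sum_i u_i\overline{f}_i$ is well defined; it is linear because each $f\mapsto\overline{f}_i$ is, and $|U(f)|\le k\|f\|_\Phi$ makes it continuous. The crux of the whole argument is Step~1 — producing the equal-$\Phi$-integral partition of the non-atomic part and, decisively, extracting the decay $n^{1-\log_2 k}\to0$; this is exactly where the $N^*$-nature of $\Phi$ (forcing $k>2$ in $2\Phi(x)=\Phi(kx)$) supplies what the inequality $1-\tfrac1p<0$ supplies in the classical treatment of $L^p$ for $0<p<1$.
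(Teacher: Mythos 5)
Your proposal is correct and follows the same overall skeleton as the paper's proof: discard the non-atomic remainder, expand $f$ over the atoms and use continuity to get $U(f)=\sum_i u_i\overline{f}_i$ with $u_i=U(\chi_{A_i})$, and control everything through the convexity inequality $\Phi^{-1}(s/a_i)\le s\,\Phi^{-1}(1/a_i)$. The one place where you genuinely diverge is Step 1: the paper disposes of the non-atomic part by citing its (mis-numbered) theorem that $L_\Phi$ of a non-atomic space has trivial dual, whereas you prove the vanishing directly by cutting the support of $g$ into $n$ pieces of equal $\Phi$-integral and extracting the decay $\sum_j\|g_j\|_\Phi\lesssim n^{1-\log_2 k}\to 0$ from the relation $2\Phi(x)=\Phi(kx)$ with $k>2$; this makes the theorem self-contained and makes explicit where concavity of $\Phi$ (the analogue of $1-\tfrac1p<0$ for $L^p$, $0<p<1$) enters. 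Your Step 2 is also tighter than the paper's: the paper slides from $\|f\|_\Phi\le 1$ to $\int_X\Phi(|f|)\,d\mu\le 1$ without comment and only really establishes the two one-sided bounds $\|U\|\le k$ and $\sup_i|u_i|\Phi^{-1}(1/a_i)\le\|U\|$, while your normalization by $\lambda>\|f\|_\Phi$ yields the exact identity $\|U\|=\sup_i|u_i|\Phi^{-1}(1/a_i)$ cleanly. Both arguments rest on the paper's standing (and somewhat delicate) hypothesis that a single $k$ satisfies $2\Phi(x)=\Phi(kx)$ for all $x>0$; you use it exactly as the paper does.
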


\begin{proof}
It is easily seen from Theorem 4.5, applied to $L_\Phi(X-\cup_i A_i),$ that $U(f)$ is independent of the values of $f$ on $X-\cup_i A_i ;$ hence no generality is lost if we assume $X=\cup_iA_i$ for simplicity.\\ Let $\chi_{A_i}$ be the characteristic function on $A_i,$ and take any $f\in L_\Phi(X),$ then $$\|\sum_{i=1}^n \overline{f}_i\xi_{A_i}-f\|_\Phi\rightarrow 0 \ \mbox{ as}\ n\rightarrow +\infty,$$ so by continuity, $$U(f)=\lim\limits_{n\rightarrow+\infty} U(\sum_{i=1}^{n}\overline{f}_i \chi_{A_i})=\lim\limits_{n\rightarrow+\infty}\sum_{i=1}^{n} U(\overline{f}_i \chi_{A_i})=\sum_{i=1}^{n}\overline{f}_i u_i,$$ where $u_i=U(\chi_{A_i}).$ Postponing the computation of $\|U\|$ for a moment.\\ Let us assume $u_i$ given, such that $\displaystyle|u_i|<k\frac{1}{\Phi^{-1}(\frac{1}{a_i})},$ and take $\|f\|_\Phi\leq 1.$
 Then  $$|\Phi(|\overline{f}_i|)a_i|\leq \|f\|_\Phi\leq 1,$$ so that $$|\overline{f}_i|=\Phi^{-1}(\Phi(|\overline{f}_i)a_i\times \frac{1}{a_i})\leq \Phi(\overline{f}_i)a_i\Phi^{-1}(\frac{1}{a_i})$$ since $\Phi^{-1}$ is convex, therefore
 \begin{eqnarray*}  |U(f)|\leq \sum |u_i\overline{f}_i|
 &\leq& k\sum_{i=1}^{+\infty}\frac{1}{\Phi^{-1}(\frac{1}{a_i})}\times \Phi(|\overline{f}_i|)a_i \Phi^{-1}(\frac{1}{a_i})\\
 &\leq& k\sum_{i=1}^{+\infty}\Phi(|\overline{f}_i|)a_i \\
 &\leq& k\int_{X}\Phi(|f|)d\mu\\ 
 &\leq& k,
 \end{eqnarray*}
 
  so $U$ is bounded hence it is continuous, and $\|U\|\leq k.$

It follows, if $k=\sup_i |u_i|\Phi^{-1}(\frac{1}{a_i}),$ that $\|U\|\geq k$ also for $$|U(\Phi^{-1}(\frac{1}{a_i}\chi_{A_i}))|=|u_i|\Phi^{-1}(\frac{1}{a_i})\leq \|U\|,$$ therefore, $\sup_i|u_i|\Phi^{-1}(\frac{1}{a_i})\leq \|U\|.$ This shows  that the series $\sum |u_i \overline{f}_i| $ converges. 
\end{proof}
%We now turn to the case in which
\textbf{Case 2:} $\mu(X)=+\infty$\\
Let's first present the following Lemmma:
\begin{lem}\label{lemma1}
We have for $\Lambda\neq\emptyset$ if and only if $L_\Phi^* \neq0.$
\end{lem}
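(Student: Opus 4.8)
The plan is to prove the two implications separately. The forward direction ($\Lambda\neq\emptyset\Rightarrow L_\Phi^*\neq 0$) is straightforward, and the substance lies in the converse, which I would establish in contrapositive form ($\mu$ non-atomic $\Rightarrow L_\Phi^*=\{0\}$), following the classical pattern by which one shows $(L^p)^*=\{0\}$ for $0<p<1$ over a non-atomic measure.

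For the forward direction, assume $\Lambda\neq\emptyset$ and fix an atom $A$ with $0<\mu(A)<+\infty$. Every measurable $f$ is a.e.\ equal to a constant $\overline{f}_A$ on $A$, and I would put $U(f)=\overline{f}_A$; linearity is clear. From $\Phi(|\overline{f}_A|)\,\mu(A)\leq\int_X\Phi(|f|)\,d\mu$, applied to $f/\|f\|_\Phi$, one gets $\Phi\!\big(|\overline{f}_A|/\|f\|_\Phi\big)\mu(A)\leq 1$, hence $|U(f)|\leq\Phi^{-1}\!\big(\tfrac{1}{\mu(A)}\big)\|f\|_\Phi$, so $U\in L_\Phi^*$; moreover $U\neq 0$ because $\chi_A\in L_\Phi$ and $U(\chi_A)=1$. (This is precisely the single-atom version of the construction in Theorem \ref{thm4.2}.)

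For the converse, suppose $\Lambda=\emptyset$ and let $U\in L_\Phi^*$, so that $\|U\|=\sup_{\|f\|_\Phi\leq1}|U(f)|<+\infty$ (the topology of $L_\Phi$ is the one induced by $\|\cdot\|_\Phi$, by Proposition \ref{prop2}). Assume for contradiction $U\neq 0$ and, after rescaling, pick $f_0$ with $U(f_0)=1$; set $c_0=\int_X\Phi(|f_0|)\,d\mu\in(0,+\infty)$. The set function $\nu(B)=\int_B\Phi(|f_0|)\,d\mu$ is a finite non-atomic measure, so by the standard intermediate-value property of non-atomic measures it assumes every value in $[0,c_0]$. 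Bisecting repeatedly — at each stage split the current set into two pieces of equal $\nu$-measure and keep the restriction of $f_0$ to the piece on which $|U|$ is larger — I would produce functions $f_n\in L_\Phi$ with $|U(f_n)|\geq 2^{-n}$ and $\int_X\Phi(|f_n|)\,d\mu=2^{-n}c_0$. By the proposition bounding $\|\cdot\|_\Phi$ in terms of $\int_X\Phi$, $\|f_n\|_\Phi\leq k^{m_n}$ with $m_n\leq\alpha_0-n$ for a constant $\alpha_0=\alpha_0(c_0)$, where $k>2$ is the constant provided by the $\Delta_2$ hypothesis, $2\Phi(x)=\Phi(kx)$; hence $\|f_n\|_\Phi\leq k^{\alpha_0}k^{-n}$. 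Combining with $2^{-n}\leq|U(f_n)|\leq\|U\|\,\|f_n\|_\Phi$ gives $(k/2)^n\leq\|U\|\,k^{\alpha_0}$ for all $n$, which is impossible since $k>2$. Hence $U=0$ and $L_\Phi^*=\{0\}$.

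The main obstacle is this converse, and within it two points demand care. First, one must justify that $\nu$ is genuinely non-atomic, so that the equal-mass bisection is legitimate; this is exactly where $\Lambda=\emptyset$ is used, e.g.\ through absolute continuity of $\nu$ with respect to $\mu$ (or, in the language of the atom properties, through property (4) together with absolute continuity of the integral). Second — and this is the crux — the gain per bisection must be $\|f_n\|_\Phi=O(k^{-n})$ with $k$ \emph{strictly} larger than $2$, the analogue of $1/p>1$ for $L^p$ with $0<p<1$; it is this strictness that makes $(k/2)^n\to\infty$ override the geometric lower bound $2^{-n}$ on $|U(f_n)|$ and thereby annihilate every nonzero functional. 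Were $k=2$ allowed the argument would collapse, but this cannot happen for a genuine $N^*$-function, since $2\Phi(x)=\Phi(2x)$ for all $x$ together with $\Phi(0)=0$ would force $\Phi$ to be linear, contradicting (\ref{1.14}).
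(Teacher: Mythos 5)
Your proof is essentially correct, and it is worth noting that the paper itself gives no argument here at all: it simply cites Theorems 5.4 and 5.5 of the external reference \cite{Arabi}, the substance being recovered only later in Section 6 (the non-atomic case in Theorem 6.3 and the atomic case in Theorem 6.4). Your forward direction coincides with the paper's Theorem 6.4, made quantitative via the bound $|\overline{f}_A|\leq\Phi^{-1}(1/\mu(A))\,\|f\|_\Phi$. Your converse, however, takes a genuinely different route from the paper's Theorem 6.3. The paper bisects the modular $\int_X\Phi(|f|)\,d\mu$, keeps the half on which $|\varphi|$ is at least $1/2$, and then \emph{doubles} that restriction, so that $|\varphi(f_n)|\geq 1$ stays bounded below while $\int_X\Phi(|f_n|)\,d\mu\leq(2/3)^n\int_X\Phi(|f|)\,d\mu\to 0$; this uses only the subadditivity $\Phi(2x)\leq 2\Phi(x)$, valid for every $N^*$-function, and needs no $\Delta_2$-type hypothesis and no quasi-norm. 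You instead keep the restrictions un-rescaled and play the decay rate of the quasi-norm ($\|f_n\|_\Phi=O(k^{-n})$, via the proposition relating $\int_X\Phi(|f|)\,d\mu<c$ to $\|f\|_\Phi\leq k^{n_0}$) against the decay rate $2^{-n}$ of $|U(f_n)|$, so your contradiction hinges on $k>2$ strictly, i.e.\ on the standing assumption $2\Phi(x)=\Phi(kx)$ from Section 4.2. Your observation that $k=2$ is impossible for a genuine $N^*$-function (it would force $\Phi(x)/x$ constant, contradicting (\ref{1.14})) correctly closes that loophole, but the paper's doubling trick is the more economical argument since it bypasses the quasi-norm machinery entirely.

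One caveat applies equally to your proof and to the lemma as stated: the forward direction needs an atom of \emph{finite} measure (on an infinite-measure atom every $f\in L_\Phi$ vanishes a.e., so your functional would be zero, and indeed $\Lambda\neq\emptyset$ with only infinite atoms need not give $L_\Phi^*\neq 0$). The paper's Theorem 6.4 makes the same finiteness assumption explicitly, so you should state it as a hypothesis rather than writing ``fix an atom $A$ with $0<\mu(A)<+\infty$'' as if that were automatic.
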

\begin{proof} B	y using the theorem 5.4 and 5.5 in \cite{Arabi}. 
\end{proof}
 For greater convenience we assume the $(X,\mathfrak{m},\mu)$ is $\sigma-$finite, (or $X\in B  $), that is, there exists  a sequence of measurable subsets $X_i\in \mathfrak{m} $ such that $X=\bigcup_i^{\infty}X_i$ and $\mu(X_i)<+\infty$ for all $i\in \mathbb{N}.$ We let $E_i,\ a_i$ and $\overline{f}_i$ have their previous meanings. 
\begin{thm}
If $(X,\mathfrak{m},\mu)$ is $\sigma-$finite, a functional $U$ on $L_\Phi(X)$ is linear if and only if: 
\begin{enumerate}
\item[a)] it is identically zero and $\Lambda=\emptyset,$ \\
or 
\item[b)] $\Lambda\neq\emptyset,$ and $U$ can be expressed in the form $U(f)=\sum_iu_i\overline{f}_i$ with $\|U\|=\sup_i|u_i|\Phi^{-1}(\frac{1}{a_i}).$
\end{enumerate}
\end{thm}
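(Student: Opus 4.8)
The plan is to reduce the $\sigma$-finite case to the finite-measure case (Theorem \ref{thm4.2}) by exhausting $X$ with finite-measure pieces. First I would write $X=\bigcup_i X_i$ with $\mu(X_i)<+\infty$ and, after replacing $X_i$ by $\bigcup_{j\le i}X_j$, assume the $X_i$ are increasing. If $\Lambda=\emptyset$, then each restriction $L_\Phi(X_i)$ has only the zero functional by Theorem \ref{thm4.2} (the $\Lambda=\emptyset$ alternative there); since simple functions supported on the $X_i$ are dense in $L_\Phi(X)$ (this is the density argument already used for ``elements of $L^1$ are dense'' combined with Proposition \ref{prop2} to pass between $d_\Phi$ and $\|\cdot\|_\Phi$ convergence), a continuous linear functional $U$ vanishing on every $L_\Phi(X_i)$ must vanish on $L_\Phi(X)$. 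This gives alternative (a), and the converse in (a) is trivial. Also, Lemma \ref{lemma1} already records that $\Lambda\neq\emptyset$ is equivalent to $L_\Phi^*\neq 0$, so the two alternatives are genuinely exhaustive and mutually exclusive.

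For alternative (b), suppose $\Lambda\neq\emptyset$ and let $U$ be linear (i.e. continuous linear) on $L_\Phi(X)$. Let $\{A_i\}$ be the countable family of atoms from the lemma of Case 1, $a_i=\mu(A_i)$, $\overline f_i$ the constant value of $f$ on $A_i$, and set $u_i=U(\chi_{A_i})$. As in the proof of Theorem \ref{thm4.2}, applying the finite-measure result to $L_\Phi\big((\bigcup_{j\le i}X_j)\setminus\bigcup_k A_k\big)$ shows $U$ ignores the atomless part, so we may assume $X=\bigcup_i A_i$. Then for $f\in L_\Phi(X)$ the partial sums $\sum_{i\le n}\overline f_i\chi_{A_i}$ converge to $f$ in $\|\cdot\|_\Phi$ (again via Proposition \ref{prop2} and dominated convergence, exactly as in the finite case), so by continuity $U(f)=\sum_i u_i\overline f_i$. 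The norm computation $\|U\|=\sup_i|u_i|\Phi^{-1}(1/a_i)$ is then carried out verbatim as in Theorem \ref{thm4.2}: the upper bound $\|U\|\le\sup_i|u_i|\Phi^{-1}(1/a_i)$ comes from the chain of inequalities using convexity of $\Phi^{-1}$ and $\sum_i\Phi(|\overline f_i|)a_i\le\int_X\Phi(|f|)\,d\mu\le 1$ whenever $\|f\|_\Phi\le1$, while the reverse bound follows by testing $U$ on the normalized atoms $\Phi^{-1}(1/a_i)\chi_{A_i}$, which have $\|\cdot\|_\Phi$-norm $1$. Conversely, given $\{u_i\}$ with $\sup_i|u_i|\Phi^{-1}(1/a_i)<\infty$, the same estimates show $f\mapsto\sum_i u_i\overline f_i$ is a well-defined bounded linear functional.

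The main obstacle I anticipate is the passage from the finite-measure atom enumeration to a single global enumeration of all atoms of $X$: one must check that the atoms of $X$ are exactly the (essentially disjoint) union of the atoms living in the pieces $X_i$, with no atom ``split'' across infinitely many pieces, and that each atom has finite measure (immediate, since it sits inside some $X_i$). This is where $\sigma$-finiteness is used essentially. A secondary technical point is justifying $\|\sum_{i\le n}\overline f_i\chi_{A_i}-f\|_\Phi\to0$ when $X$ has infinite measure: here one splits $f=f\mathbf 1_{\bigcup_{i\le n}A_i}+f\mathbf 1_{X\setminus\bigcup_{i\le n}A_i}$, controls the tail by $\int_{X\setminus\bigcup_{i\le n}A_i}\Phi(|f|)\,d\mu\to0$ (tail of a convergent integral), and invokes Proposition \ref{prop2} to convert this modular smallness into quasi-norm smallness. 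Once these two points are in place, everything else is a transcription of the Case 1 argument.
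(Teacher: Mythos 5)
Your plan is correct and follows essentially the same route as the paper: decompose $X$ into finite-measure pieces, apply the finite-measure theorem (Theorem \ref{thm4.2}) on each piece together with Lemma \ref{lemma1}, and reassemble $U$ as a sum over the atoms, with the norm identity transcribed from Case 1. The only cosmetic differences are that the paper takes the $X_i$ disjoint and writes $U=\sum_j U_j$ with $U_j(f)=U(f\chi_{X_j})$, establishing absolute convergence of that series via a sign-flipped function $f_0$ with $|f_0|=|f|$, whereas you use an increasing exhaustion and a direct global enumeration of the atoms; both handle the same technical points.
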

\begin{proof}
The sets $X_i$ which exists as $(X,\mathfrak{m},\mu)$ is $\sigma-$finite can obviously be taken disjoint; we let $U_j$ be a linear functional on $L_\Phi(X)$ defined by $U_j(f)=U(f.\chi_{X_j}).$ Then $U_j$ is linear on $L_\Phi(X_j),$ so by Theorem \ref{thm4.2} and Lemma \ref{lemma1} either $U_j$ is identically zero on $U_j(f)=\sum_iu_{ji}\overline{f}_{ji}.$ \\
Now $\lim\limits_{n\rightarrow +\infty}\|\sum_{i=1}^{n}f\chi_{X_i}-f\|_\Phi=0.$ So $U(f)=\lim\limits_{n\rightarrow\infty}U(f\chi_{X_j})=\sum_jU_j(f).$ Moreover there is an $f_0\in L_\Phi(X)$ such that $|f_0(y)|=|f(y)|$ for $y\in X$ and $U_j(f_0)=|U_j(f)|$ for all $j;$ hence
 $$|U_j(f)|\leq \sum_j|U_j(f)|=\sum_jU_j(f_0)=U(f_0)\leq \|U\|.\|f\|_\Phi,$$so $\|U_j\|\leq \|U\|$ and the series $\sum_jU_j(f)$ converges absolutely for each $f\in L_\Phi.$
  Then $$|u_{ji}|\Phi^{-1}(\frac{1}{a_{ji}})\leq \|U_j\|\leq \|U\|$$ and $U(f)=\sum_j\sum_iu_{ji}\overline{f}_{ji}$ unless all  $U_j$ are identically zero, that is, unless $\mathcal{U}=\emptyset.$ Rearranging, we get $U(f)=\sum u_if_i.$ \\
  The other conclusions follow as in Theorem \ref{thm4.2}. 
\end{proof}
\textbf{Case 3:}
There remains the case in which $(X,\mathfrak{m},\mu)$ is not  $\sigma-$finite. As we did in the proof of Lemma 7 in \cite{D}, we can define a well-ordered set $\{A_\gamma\}$ of sets of $\Lambda$ disjoint up to sets of measure of measure zero, equal to some $A_\gamma;$ however we have no assurance that the sequence will be countable. As before we let $a_\gamma=\mu(A_\gamma)$ and $\overline{f}_\gamma$ be the value of $f$ almost everywhere on $A_\gamma.$\\
Now, if $f\in L_\Phi(X)$ if and only if $\Phi(f)\in L^1(X).$ By Lemma 8 in \cite{D} $$\{x\in X,f(x)\neq 0\}=\{x\in X;\Phi(f)(x)\neq 0\}\in B.$$
 By this, it is possible to put each function $f$ of $L_\Phi(X)$ into at least one class $D_E$ such that $f(x)=0$ if $x\in X\backslash E$ and $E\in B;$ then $D_E$ is equivalent to $L_\Phi(E),$ and if we set 
 $D_E(f)=U(f)$ for $f\in D_E,$ $U_E$ is linear on $L_\Phi(E)$  and therefore can be expressed as before where the $A_i$ of theorem above will be those $E_\gamma$ which except for sets of measure zero, lie in $E.$ But $f=0$ on $X\backslash E;$ so we can write $U(f)=\sum_{\gamma}U_\gamma \overline{f}_\gamma $ if it is not identically zero, with the convention that the sum of any number of terms in which $\overline{f}_\alpha $  or $u_\alpha$ is zero shall be zero.\\
  Since this can be done for each $f\in L_\Phi(X),$ we get our final result, including the previous theorems as special cases.
 \begin{thm}
 A functional $U$ on $L_\Phi(X)$ is linear if and only if: 
 \begin{enumerate}
 \item[a)]  $\Lambda=\emptyset,$ and $U$ is identically zero \\
 or 
 \item[b)] $\Lambda\neq\emptyset,$ and $U$ can be expressed in the form $U(f)=\sum_\gamma u_i\overline{f}_\gamma,$ and  $\|U\|=\sup_\gamma|u_\gamma|\Phi^{-1}(\frac{1}{a_\gamma}).$
 \end{enumerate} 
 \end{thm}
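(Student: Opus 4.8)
The plan is to reduce the general (non-$\sigma$-finite) situation to the $\sigma$-finite one already settled, exploiting the fact that every individual $f\in L_\Phi(X)$ has $\sigma$-finite support. Indeed, $f\in L_\Phi(X)$ if and only if $\Phi(f)\in L^1(X)$, and by Lemma 8 in \cite{D} the set $S_f=\{x\in X:f(x)\neq0\}=\{x\in X:\Phi(f)(x)\neq0\}$ belongs to $B$. Fix once and for all the well-ordered family $\{A_\gamma\}$ of atoms (pairwise disjoint up to null sets, each atom equal modulo null sets to some $A_\gamma$) produced exactly as in the proof of Lemma 7 in \cite{D}, and write $a_\gamma=\mu(A_\gamma)$ and $\overline{f}_\gamma$ for the a.e.\ constant value of $f$ on $A_\gamma$. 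For a given $f$, choose $E\in B$ with $S_f\subset E$; then $f$ lies in the class $D_E$, which is equivalent to $L_\Phi(E)$, and the restriction $U_E$ of $U$ to $D_E$ is a continuous linear functional on the $\sigma$-finite space $L_\Phi(E)$.

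First I would apply the $\sigma$-finite result (Case 2) to $U_E$: it gives that either $U_E\equiv0$ and $E$ contains no atom up to null sets, or $U_E(f)=\sum_{A_\gamma\subset E}u_\gamma\overline{f}_\gamma$ with $u_\gamma=U(\chi_{A_\gamma})$. The coefficients $u_\gamma$ depend only on $U$, not on the choice of $E$: if $E,E'\in B$ both contain $S_f$, applying the Case 2 representation on $E\cup E'\in B$ and invoking uniqueness of the expansion forces the coefficients to agree, while for any $A_\gamma$ not essentially contained in $E$ one has $\overline{f}_\gamma=0$ because $f$ vanishes off $E$. Hence, with the convention that any term in which $\overline{f}_\gamma=0$ or $u_\gamma=0$ is dropped, $U(f)=\sum_\gamma u_\gamma\overline{f}_\gamma$, a sum with only countably many nonzero summands. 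If $\Lambda=\emptyset$ there are no atoms, so $\Lambda\cap E=\emptyset$ for every $E\in B$ and the Case 2 theorem forces $U_E\equiv0$ for all such $E$, whence $U\equiv0$; this is alternative a).

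Next I would compute the norm. On one hand, for each $E\in B$ the Case 2 theorem gives $\|U_E\|=\sup\{|u_\gamma|\Phi^{-1}(1/a_\gamma):A_\gamma\subset E\}\leq\|U\|$; since every single atom is $\sigma$-finite, letting $E$ run through $B$ yields $\sup_\gamma|u_\gamma|\Phi^{-1}(1/a_\gamma)\leq\|U\|$ (the same bound also drops out by testing $U$ on $f=\Phi^{-1}(1/a_\gamma)\chi_{A_\gamma}$, which has $\|f\|_\Phi\leq1$). On the other hand, the chain of estimates in the proof of Theorem \ref{thm4.2}---using $|\overline{f}_\gamma|\leq\Phi(|\overline{f}_\gamma|)a_\gamma\Phi^{-1}(1/a_\gamma)$, valid because $\Phi^{-1}$ is convex, together with $\sum_\gamma\Phi(|\overline{f}_\gamma|)a_\gamma\leq\int_X\Phi(|f|)\,d\mu$---gives $|U(f)|\leq\big(\sup_\gamma|u_\gamma|\Phi^{-1}(1/a_\gamma)\big)\|f\|_\Phi$, i.e.\ $\|U\|\leq\sup_\gamma|u_\gamma|\Phi^{-1}(1/a_\gamma)$. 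Equality follows.

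Finally, for the converse implication I would start from numbers $u_\gamma$ with $\sup_\gamma|u_\gamma|\Phi^{-1}(1/a_\gamma)<\infty$ (equivalently $|u_\gamma|<k/\Phi^{-1}(1/a_\gamma)$ for a suitable constant $k$) and set $U(f)=\sum_\gamma u_\gamma\overline{f}_\gamma$: for each $f$ only countably many atoms meet $S_f$, so the sum is countable, and it converges absolutely and is bounded by $k\|f\|_\Phi$ by the estimate just quoted, while linearity is immediate from the linearity of $f\mapsto\overline{f}_\gamma$. I expect the main obstacle to be exactly the bookkeeping attached to the uncountable index set: checking that the transfinite construction of $\{A_\gamma\}$ really exhausts every atom up to null sets, that $U(f)$ is genuinely independent of the $\sigma$-finite hull $E$ chosen for $S_f$, and that the partially defined class decomposition $D_E$ is compatible across different $E$. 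Once these points are secured, the whole statement is inherited verbatim from the $\sigma$-finite case, with the previous theorems appearing as special cases.
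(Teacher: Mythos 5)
Your proposal follows essentially the same route as the paper: reduce to the already-settled $\sigma$-finite case by observing that $\Phi(f)\in L^1$ forces the support of each $f$ to lie in some $E\in B$, restrict $U$ to the class $D_E\cong L_\Phi(E)$, apply the previous theorem there, and patch the representations together over the well-ordered family $\{A_\gamma\}$ with the zero-term convention. The extra care you take about independence of the choice of $E$ and the explicit norm computation only fills in details the paper leaves implicit ("the other conclusions follow as in Theorem \ref{thm4.2}"), so this is the same argument, not a different one.
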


\begin{rem}
$L_\Phi$ is a quasi-normed linear space , by using Theorem 3.4 in \cite{Gobardhan}  then the Dual space of $L_\Phi$ is a complete normed linear space. \end{rem}

\section{\textbf{The $L_\Phi$ Spaces with Dual Space 0}}

%Let's now speak about some
In this part we  speak about some topological vector spaces that are not in general locally convex. For example the function $d(f,g) =\int_{0}^{1} |f(x)- g(x)|^{1/2}dx$ is a metric on $L^{1/2}[0,1].$ The topology
$L^{1/2}[0,1]$	 obtains from this metric is not locally convex.
Theorem 3.2 in \cite{K} proved that $L^p(\mu)$  for $0<p<1$ is locally convex space where the measure $\mu$ assumes finitely many values.\\
%but only in the case where the measure $\mu$ assumes finitely many values see Theorem 3.2 in \cite{K}.
 For a measure space $(X,\mathcal{M},\mu)$ and $\Phi$ is an $N^*-$function, is $L_\Phi$ ever locally convex?

\begin{thm}
$L_\Phi(X)$ is locally convex if and only if the measure $\mu$ assume finitely many values. 
\end{thm}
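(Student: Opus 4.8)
The plan is to prove the sharper fact that $L_\Phi(X)$ is locally convex \emph{exactly when it is finite dimensional}, and that this happens precisely when $\mu$ takes finitely many values. The bridge is the measure-theoretic remark that $\mu$ assumes finitely many values if and only if $X$ is, modulo a $\mu$-null set, a finite disjoint union of atoms $A_1,\dots,A_n$ of finite positive measure. Indeed, if $\mu$ takes finitely many values then $X$ can contain no atomless set of positive measure — by property $(4)$ of the atoms in Section~2 such a set would carry sets $B_k$ with $\mu(B_k)>0$ and $\mu(B_k)\to 0$, hence infinitely many values — and it can contain only finitely many atoms, since distinct atoms are disjoint modulo null sets (property $(3)$) and infinitely many of them would produce the infinitely many distinct partial sums $\sum_{i\le k}\mu(A_i)$; finally the complement of these atoms must be null, else it would contain a further atom. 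The converse being obvious, this is the whole content of the equivalence ``finitely many values $\iff$ finite dimensional'': in the finite case every measurable function is a.e.\ constant on each $A_i$ by property $(2)$, so $L_\Phi(X)$ is linearly isomorphic to $\mathbb{R}^n$ and $d_\Phi$ metrizes its Euclidean topology; a finite dimensional Hausdorff topological vector space is locally convex, which settles one direction.

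For the other direction I would argue by contraposition: suppose $\mu$ assumes infinitely many values. By the dichotomy above, either (i) $X$ contains a non-atomic measurable set of positive measure, or (ii) $X$ contains infinitely many pairwise disjoint atoms of finite positive measure (possibly both). Since local convexity passes to linear subspaces with the induced topology, and since $L_\Phi$ of a sub-measure-space embeds isometrically as a closed subspace of $L_\Phi(X)$, it suffices to contradict local convexity of $L_\Phi$ in each of these two model situations.

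In case (i), restrict to a non-atomic $Y\subseteq X$ with $0<\mu(Y)<\infty$ and look at $L_\Phi(Y)=\{f\in L_\Phi(X): f=0\text{ a.e.\ off }Y\}$, a nonzero closed subspace isometric to $L_\Phi$ over the non-atomic finite measure space $(Y,\mu)$, whose continuous dual is $\{0\}$ by Theorem~\ref{thm4.2} (equivalently Lemma~\ref{lemma1}). If $L_\Phi(X)$ were locally convex, so would be this nonzero subspace, and the Hahn--Banach theorem would then furnish a nonzero continuous linear functional on it — a contradiction. (One can instead verify directly that every convex neighbourhood $V$ of $0$ in $L_\Phi(Y)$ is all of $L_\Phi(Y)$: given $f$, split $Y=Y_1\cup\dots\cup Y_n$ with $\int_{Y_j}\Phi(n|f|)\,d\mu=\frac{1}{n}\int_Y\Phi(n|f|)\,d\mu$, using non-atomicity of the measure $A\mapsto\int_A\Phi(n|f|)\,d\mu$; then $f=\frac1n\sum_j nf\chi_{Y_j}$ and $d_\Phi(nf\chi_{Y_j},0)=\frac1n\int_Y\Phi(n|f|)\,d\mu\to 0$ by dominated convergence, since $\Phi(nt)\le n\Phi(t)$, so each $nf\chi_{Y_j}$ lies in $V$ for large $n$ and hence $f\in V$.)

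In case (ii), let $A_1,A_2,\dots$ be disjoint atoms with $a_i=\mu(A_i)\in(0,\infty)$, and for each $n$ put $x^{(n)}_i=\Phi^{-1}\!\big(\tfrac{1}{na_i}\big)\chi_{A_i}$ for $1\le i\le n$. A short computation with the definition of $\|\cdot\|_\Phi$ gives $\|x^{(n)}_i\|_\Phi=\Phi^{-1}(1/(na_i))/\Phi^{-1}(1/a_i)$, and since $\int_X\Phi\big(\sum_{i=1}^n x^{(n)}_i\big)\,d\mu=\sum_{i=1}^n\frac1n=1$ we get $\big\|\sum_{i=1}^n x^{(n)}_i\big\|_\Phi=1$. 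Using the $\Delta_2$-condition in force, $2\Phi(t)=\Phi(kt)$ with $k>2$, which yields $\Phi^{-1}(y/2^m)=\Phi^{-1}(y)/k^m$; choosing $m$ with $2^m\le n<2^{m+1}$ and using monotonicity of $\Phi^{-1}$ gives $\|x^{(n)}_i\|_\Phi\le k^{-m}$, hence $\sum_{i=1}^n\|x^{(n)}_i\|_\Phi\le n k^{-m}<2(2/k)^m\to 0$. Thus $\big\|\sum_i x^{(n)}_i\big\|_\Phi\big/\sum_i\|x^{(n)}_i\|_\Phi\to\infty$. If $L_\Phi(X)$ were locally convex, then (recalling that by Proposition~\ref{prop2} the topology is that of $\|\cdot\|_\Phi$) the Minkowski functional of a balanced convex neighbourhood of $0$ contained in the unit quasi-ball would be a norm $p$ with $\|f\|_\Phi\le p(f)\le C\|f\|_\Phi$ for some $C$, and subadditivity of $p$ would force $\big\|\sum_i x_i\big\|_\Phi\le C\sum_i\|x_i\|_\Phi$ for all finite families, contradicting the preceding limit. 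Hence $L_\Phi(X)$ is not locally convex in either case, and the theorem follows. The non-atomic case is essentially the classical $L^p$ argument and is short given Theorem~\ref{thm4.2}; the real obstacle is case (ii), where the atoms may carry pairwise different masses, so one must quantify how sublinearly $\Phi^{-1}$ vanishes at the origin — this is exactly where the standing $\Delta_2$-hypothesis on $\Phi^{-1}$ is indispensable.
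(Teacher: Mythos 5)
Your proposal is correct, but it reaches the hard direction by a different route than the paper. The paper runs a single construction: from the infinitude of values of $\mu$ it extracts disjoint sets $A_k$ of positive measure, builds $f_k$ with $d_\Phi(f_k,0)=\varepsilon$, and shows the averages $h_n=\frac1n\sum_{k\le n}f_k$ satisfy $d_\Phi(h_n,0)\to\infty$, so that no convex neighbourhood of $0$ can sit inside the $\varepsilon$-ball. (As written the paper's identity $\int_X\Phi(|h_n|)\,d\mu=n\Phi(\tfrac1n)\varepsilon$ is not literally valid for a general $N^*$-function — it silently commutes $\Phi$ past $\Phi^{-1}$ — and the correct quantitative substitute is exactly the $\Delta_2$ computation you carry out in your case (ii): $\Phi(y/k^m)=\Phi(y)/2^m$ gives $\int\Phi(|h_{k^m}|)\,d\mu=(k/2)^m\varepsilon\to\infty$ since $k>2$.) You instead split into a non-atomic case and an infinitely-many-atoms case. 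The dichotomy itself is sound, but it is more machinery than needed: both of your cases supply infinitely many disjoint sets of finite positive measure, and the paper's single averaging argument (or your case-(ii) estimate, which works verbatim for disjoint sets that are not atoms) already covers both at once, without invoking the dual-space theorem or Hahn--Banach. What your version buys is robustness and precision: the parenthetical ``split $Y$ into $n$ pieces of equal $\Phi$-mass and write $f$ as an average of small elements'' argument in case (i) is a clean, self-contained proof that every convex neighbourhood of $0$ is the whole space (the classical Day argument), and your case (ii) makes explicit both where the standing hypothesis $\Phi^{-1}\in\Delta_2$ enters and why $k>2$ strictly (if $k=2$ then $\Phi(x)/x$ would be constant along the scalings $x\mapsto x/2^n$, contradicting $\Phi(x)/x\to\infty$ as $x\to 0^+$). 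Minor caveats: your $\mathtt{\backslash ref}$ targets for the dual-zero statements point at the wrong theorems (the non-atomic dual-zero result is the one in Section 6, not Theorem \ref{thm4.2}), and the Hahn--Banach detour is dispensable since your direct argument already does the job.
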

\begin{proof}
If $\mu$ takes finitely many values, then $X$ is the disjoint union of finitely many atoms
%, say
 $B_1,...,B_m.$ A measurable function is constant almost everywhere on each atom, so $L_\Phi(X)$ is topologically  just Euclidean space (of dimension equal to the number of atoms of finite measure) which is locally convex.\\
Now assume $\mu$ takes infinitely many values.\\
Since $\mu$ has infinitely many values, there is a sequence of subsets $Y_i\subset X$ such that $$0<\mu(Y_1)<\mu(Y_2)<...$$
 From the sets $Y_i,$ we can construct recursively a sequences of disjoint sets $A_i$ such that $\mu(A_i)>0.$\\
 Fix $\displaystyle \varepsilon>0.$ Let $f_k=\Phi(\displaystyle\frac{\varepsilon}{\mu(A_k)})\chi_k,$ so 
 \begin{equation*}
 \int_{X}\Phi(|f_k|)d\mu=\int_{X}\frac{\varepsilon}{\mu(A_k)}\chi_{A_k} d\mu=\frac{\varepsilon}{\mu(A_k)}\times \mu(A_k)=\varepsilon.
 \end{equation*}
If $L_\Phi(X)$ is locally convex, then any open set around $0$ contains a convex open set around $0,$ which in turn contains some
$\varepsilon-$ball (and thus every $f_k$). We will show an average of enough $f_k$'s is arbitrarily far from $0$
in the metric on $L_\Phi(X),$ and that will contradict local convexity.

Let  $h_n=\displaystyle \frac{1}{n}\sum_{k=1}^{n}f_k.$  Since $f_k$'s are supported on disjoint sets, $$\displaystyle \int_{X}\Phi(|h_n|)d\mu=\sum_{k=1}^{n}\int_{A_k}\Phi(|h_n|)d\mu=\sum_{k=1}^{n}\varepsilon \Phi(\frac{1}{n})=n\Phi(\frac{1}{n})\varepsilon.$$  Since $\displaystyle \lim\limits_{x\rightarrow 0^+}\frac{\Phi(x)}{x}=+\infty,$ thus $\displaystyle\lim\limits_{n\rightarrow +\infty}\varepsilon n \Phi(\frac{1}{n})=\displaystyle\varepsilon \lim\limits_{n\rightarrow +\infty}\frac{\Phi(\frac{1}{n})}{\frac{1}{n}}=+\infty, $ thus, $L_\Phi(X)$ is not locally convex.   
 
\end{proof}

\begin{lem}\label{lemma1}
If $(X,\mathcal{M},\mu)$ is a non-atomic measure space, then for any non negative $F\in \mathcal{L}^1(\mu,\mathbb{R}),$  $Fd\mu$  is a finite non-atomic measure. 
\end{lem}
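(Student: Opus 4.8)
The plan is to introduce the set function $\nu(A)=\int_A F\,d\mu$ on $\mathcal{M}$, check that it is a finite measure, and then show it has no atoms. That $\nu$ is a measure is routine: $\nu(\emptyset)=0$ and countable additivity come from the monotone convergence theorem applied to the partial sums $\sum_{k=1}^{n}F\chi_{A_k}$, and $\nu$ is finite since $\nu(X)=\int_X F\,d\mu<+\infty$ because $F\in\mathcal{L}^1(\mu,\mathbb{R})$. Note also that $\mu(A)=0$ implies $\nu(A)=0$.

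For non-atomicity I would argue by contradiction: assume some $A\in\mathcal{M}$ is a $\nu$-atom, so $\nu(A)>0$ and every measurable $B\subseteq A$ has $\nu(B)\in\{0,\nu(A)\}$; the goal is to produce a measurable $B\subseteq A$ with $0<\nu(B)<\nu(A)$. First I would truncate: since $F$ vanishes on $\{F=0\}$, one has $\nu(A\cap\{F=0\})=0$, and the sets $A\cap\{F>1/n\}$ increase to $A\cap\{F>0\}$, so continuity from below gives $\nu(A\cap\{F>1/n\})\uparrow\nu(A)>0$; fix $n$ with $A':=A\cap\{F>1/n\}$ and $\nu(A')>0$. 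Because $F>1/n$ on $A'$, we get $\mu(A')\le n\,\nu(A')\le n\,\nu(X)<+\infty$, and $\mu(A')>0$ (else $\nu(A')=0$).

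Now I would invoke the non-atomicity of $\mu$ on $A'$: since $0<\mu(A')<+\infty$, there is a measurable $B\subseteq A'$ with $0<\mu(B)<\mu(A')$, hence also $\mu(A'\setminus B)=\mu(A')-\mu(B)>0$. Using $F>1/n$ on $A'$ once more, $\nu(B)\ge\tfrac1n\mu(B)>0$ and $\nu(A'\setminus B)\ge\tfrac1n\mu(A'\setminus B)>0$, so $\nu(B)=\nu(A')-\nu(A'\setminus B)<\nu(A')\le\nu(A)$. This yields a measurable $B\subseteq A$ with $0<\nu(B)<\nu(A)$, contradicting that $A$ is a $\nu$-atom; hence $\nu=F\,d\mu$ is a finite non-atomic measure.

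The main obstacle, and the reason the truncation step is needed, is that $\mu(A)$ may be infinite, so one cannot split $A$ directly using the definition of non-atomicity. Restricting to $A'=A\cap\{F>1/n\}$ is precisely what makes $\mu(A')$ finite, which lets us both apply non-atomicity of $\mu$ to $A'$ and pass freely between the positivity of $\mu$ and of $\nu$ on subsets of $A'$ (since there $\tfrac1n\mu\le\nu$). Everything else is bookkeeping with the monotone convergence theorem.
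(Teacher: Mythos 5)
Your proof is correct. Note that the paper states this lemma without any proof at all (it is used as a black box in the proof that $L_\Phi(X)^*=0$ for non-atomic $\mu$), so there is no argument of the authors' to compare against; your write-up actually supplies the missing justification. The argument you give is the standard one: $\nu=F\,d\mu$ is a finite measure absolutely continuous with respect to $\mu$, and a putative $\nu$-atom $A$ is destroyed by truncating to $A'=A\cap\{F>1/n\}$, where the two-sided comparison $\tfrac1n\mu\le\nu\le\nu(X)$ lets you transfer the $\mu$-splitting coming from non-atomicity of $\mu$ into a $\nu$-splitting. One small remark on your closing commentary: the truncation is needed not so much because $\mu(A)$ might be infinite (with the paper's definition of non-atomicity one can split any set of positive $\mu$-measure, finite or not), but because without the lower bound $F>1/n$ the set $B$ produced by non-atomicity of $\mu$ could satisfy $\nu(B)=0$; your proof handles this correctly, only the explanatory sentence slightly misattributes the role of the truncation.
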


\begin{lem}\label{lemma2}
If $(X,\mathcal{M},\lambda)$ is any finite  non-atomic measure space, then for all $t$ in  $[0,\nu(X)]$ there is  some $A\in \mathcal{M}$ such that   $\nu(A)=t.$  
\end{lem}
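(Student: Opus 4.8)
The plan is to reduce the statement to the classical ``divisibility'' property of non-atomic measures and then build the desired set $A$ by a greedy exhaustion. Throughout write $\nu$ for the finite non-atomic measure on $(X,\mathcal{M})$ (the hypothesis names it $\lambda$; I keep the symbol $\nu$ used in the conclusion). The cases $t=0$ and $t=\nu(X)$ being trivial, fix $t\in(0,\nu(X))$.

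First I would establish the key lemma: \emph{if $E\in\mathcal{M}$ has $\nu(E)>0$, then for every $\eps>0$ there is $F\in\mathcal{M}$ with $F\subseteq E$ and $0<\nu(F)<\eps$.} This is the only place non-atomicity is used. Since $E$ is not an atom, choose $E_1\subseteq E$ with $0<\nu(E_1)<\nu(E)$; replacing $E_1$ by $E\setminus E_1$ if necessary, we may assume $\nu(E_1)\le\tfrac12\nu(E)$. Iterating (each $E_n$ has positive measure, hence is again not an atom) produces $E\supseteq E_1\supseteq E_2\supseteq\cdots$ with $0<\nu(E_n)\le 2^{-n}\nu(E)$, so $\nu(E_n)<\eps$ for $n$ large.

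Next I would run the exhaustion. Set $A_0=\emptyset$; given $A_n\in\mathcal{M}$ with $\nu(A_n)\le t$, put $s_n=\sup\{\nu(B):B\in\mathcal{M},\ B\subseteq X\setminus A_n,\ \nu(B)\le t-\nu(A_n)\}$ and choose $B_n\subseteq X\setminus A_n$ with $\nu(B_n)\le t-\nu(A_n)$ and $\nu(B_n)\ge s_n-2^{-n}$, then set $A_{n+1}=A_n\cup B_n$. By construction $A_0\subseteq A_1\subseteq\cdots$ and $\nu(A_{n+1})=\nu(A_n)+\nu(B_n)\le t$. Let $A=\bigcup_n A_n$; continuity from below gives $\nu(A)=\lim_n\nu(A_n)\le t$. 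Suppose, for contradiction, $\nu(A)<t$ and set $c=t-\nu(A)>0$; then $\nu(X\setminus A)=\nu(X)-\nu(A)\ge t-\nu(A)=c>0$, so by the key lemma there is $F\subseteq X\setminus A$ with $0<\nu(F)<c$. For every $n$ we have $F\subseteq X\setminus A\subseteq X\setminus A_n$ and $\nu(F)<c\le t-\nu(A_n)$, so $F$ competes in the supremum defining $s_n$, whence $\nu(B_n)\ge s_n-2^{-n}\ge\nu(F)-2^{-n}$. Summing over $n$, $\sum_n\nu(B_n)=\lim_N\nu(A_N)\le t<\infty$, yet $\sum_n(\nu(F)-2^{-n})=+\infty$ because $\nu(F)>0$ is fixed --- a contradiction. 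Hence $\nu(A)=t$, as desired.

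I expect the only real content to be the key lemma, i.e. converting ``$E$ is not an atom'' into ``$E$ contains subsets of arbitrarily small positive measure'' via the halving argument; once that is in place the exhaustion is routine bookkeeping. One could instead argue by Zorn's lemma on $\{A\in\mathcal{M}:\nu(A)\le t\}$ ordered by inclusion modulo null sets, bounding each chain by extracting a countable cofinal subchain and taking its union, and then showing a maximal element has measure exactly $t$ by the same ``add a small $F$'' trick; the countable greedy version above avoids transfinite machinery and is, I think, cleaner to write up.
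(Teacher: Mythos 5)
Your proof is correct. Note that the paper itself offers no argument for this lemma at all: it is stated bare, as a citation-free appeal to the classical Sierpi\'nski intermediate-value theorem for finite non-atomic measures (the paper does not even reconcile the notational slip between $\lambda$ in the hypothesis and $\nu$ in the conclusion, which you handled sensibly). So there is no ``paper proof'' to compare against; what you have done is supply the missing argument, and you did it by the standard route. Your key lemma (every set of positive measure contains subsets of arbitrarily small positive measure, via repeated halving, where the only subtlety is replacing $E_1$ by $E\setminus E_1$ to force $\nu(E_1)\le\tfrac12\nu(E)$) is exactly where non-atomicity enters, and your greedy exhaustion with the $2^{-n}$ slack in the supremum is airtight: the $B_n$ are pairwise disjoint, so $\sum_n\nu(B_n)=\nu(A)\le t<\infty$, while the lower bound $\nu(B_n)\ge\nu(F)-2^{-n}$ with $\nu(F)>0$ fixed forces the series to diverge, giving the contradiction. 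The one point worth writing out explicitly in a final version is the selection of $B_n$: when $s_n\le 2^{-n}$ you should take $B_n=\emptyset$, and otherwise the definition of supremum furnishes a competitor within $2^{-n}$ of $s_n$; as stated this is implicit but harmless. Your closing remark about the Zorn's lemma variant is accurate but unnecessary, and the countable greedy construction is indeed the cleaner choice here.
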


\begin{thm}
If $(X,\mathcal{M},\mu)$ is a non-atomic measure space, then $L_\Phi(X)^*=0.$
\end{thm}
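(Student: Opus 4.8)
The plan is to mimic the classical argument that $(L^p)^* = 0$ for $0 < p < 1$ on a non-atomic measure space, adapting each step to the $N^*$-function setting. Suppose $U$ is a continuous linear functional on $L_\Phi(X)$; I want to show $U \equiv 0$. The strategy is to fix $f \in L_\Phi(X)$ and produce, for each $n$, a decomposition $f = g_1 + \dots + g_n$ into functions supported on disjoint sets, with each $g_j$ having small $\|\cdot\|_\Phi$-norm; then $|U(f)| \le \sum_j |U(g_j)| \le n \max_j \|U\| \, \|g_j\|_\Phi$, and if the right side tends to $0$ as $n \to \infty$ we are done.

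First I would reduce to the case where $\mu$ is finite and non-atomic: since $f \in L_\Phi(X)$ iff $\Phi(f) \in L^1(X)$, the set $E = \{x : f(x) \neq 0\}$ carries a finite measure $\Phi(|f|)\,d\mu$, and by Lemma \ref{lemma1} the measure $\nu := \Phi(|f|)\,d\mu$ is a finite non-atomic measure on $E$. Next, using Lemma \ref{lemma2} applied to $\nu$, I would split $E$ into measurable pieces $E_1, \dots, E_n$ with $\nu(E_j) = \frac{1}{n}\nu(E) = \frac{1}{n}\int_X \Phi(|f|)\,d\mu$ for each $j$ (chop off successive sets of the right measure). Set $g_j = f\chi_{E_j}$, so that $f = \sum_{j=1}^n g_j$ and $\int_X \Phi(|g_j|)\,d\mu = \frac{1}{n}\int_X \Phi(|f|)\,d\mu =: \frac{c}{n}$.

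Now I need to convert this $d_\Phi$-smallness into $\|\cdot\|_\Phi$-smallness and check it really does go to $0$. By the proposition bounding $\|f\|_\Phi$ in terms of $\int_X \Phi(|f|)\,d\mu$ (the one giving $\|g_j\|_\Phi \le k^{n_0}$ whenever $\int_X \Phi(|g_j|)\,d\mu < c'$ with $n_0 = [\ln(c')/\ln 2]+1$), applied with $c' = c/n$, we get $\|g_j\|_\Phi \le k^{n_0(n)}$ where $n_0(n) = [\ln(c/n)/\ln 2] + 1 \to -\infty$; more precisely $k^{n_0(n)} \le k \cdot k^{\ln(c/n)/\ln 2} = k (c/n)^{\ln k/\ln 2}$. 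Hence
\begin{equation*}
|U(f)| \le \sum_{j=1}^n |U(g_j)| \le n \|U\| \max_j \|g_j\|_\Phi \le \|U\| \, k \, c^{\ln k/\ln 2} \, n^{1 - \ln k/\ln 2}.
\end{equation*}
Since $k \ge 2$ we have $\ln k/\ln 2 \ge 1$, so the exponent $1 - \ln k/\ln 2 \le 0$; the right-hand side stays bounded, and in fact I would want strict inequality $k > 2$ (equivalently $\ln k/\ln 2 > 1$) to force $n^{1-\ln k/\ln 2} \to 0$ and conclude $U(f) = 0$.

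The main obstacle I anticipate is exactly this last point: the $\Delta_2$-type condition only guarantees $k \ge 2$, and when $k = 2$ the above estimate merely bounds $|U(f)|$ rather than killing it. To handle the borderline case I would iterate: having split each $g_j$ further, one produces decompositions of $f$ into $n^m$ pieces with the same structure, replacing $k$ by an effectively larger exponent, or alternatively invoke the elementary concavity fact $\lim_{x\to 0^+}\Phi(x)/x = +\infty$ directly — averaging $n$ equal pieces as in the local-convexity theorem above shows $\|g_j\|_\Phi$ decays faster than $1/n$, which is what actually makes $n\|g_j\|_\Phi \to 0$. Getting this decay rate cleanly from the $N^*$-function axioms, rather than from a crude $\Delta_2$ estimate, is the step that needs the most care. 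The remaining verifications — measurability of the pieces, absolute convergence, continuity of $U$ — are routine.
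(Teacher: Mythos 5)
Your proposal is essentially correct, but it takes a genuinely different route from the paper's. The paper runs the classical Day-style contradiction: assuming $\varphi\neq 0$ and $|\varphi(f)|\ge 1$, it uses the two non-atomicity lemmas to write $f=g_1+g_2$ on disjoint sets with each piece carrying a fixed fraction of $\int_X\Phi(|f|)\,d\mu$, selects the piece on which $|\varphi|$ is at least $\tfrac12$, doubles it, and iterates to get $f_n$ with $|\varphi(f_n)|\ge 1$ while $d_\Phi(f_n,0)\to 0$, contradicting continuity. You instead split $f$ in one shot into $n$ disjoint pieces of equal modular $c/n$ and estimate $|U(f)|\le n\,\|U\|\max_j\|g_j\|_\Phi$ directly through the modular-to-quasi-norm bound. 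Your version is non-iterative and, as a bonus, sidesteps a soft spot in the paper's own argument: with a fair two-way split the doubling step only yields $\int_X\Phi(|2g_i|)\,d\mu\le 2\cdot\tfrac12\int_X\Phi(|f|)\,d\mu$, i.e.\ no geometric decay, unless one has $\Phi(2x)\le\theta\Phi(x)$ with $\theta<2$; and the paper's assertion that \emph{both} pieces carry $\tfrac13$ of the modular cannot hold, since the two integrals must sum to the whole.

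The loose end you flag --- the borderline case $k=2$ --- in fact closes, and neither of the vaguer remedies you sketch is needed. If $2\Phi(x)=\Phi(2x)$ for all $x>0$, the integral representation gives $p(x)=p(2x)$ for a.e.\ $x>0$; since $p$ is decreasing with $p(+\infty)=0$, iterating $p(x)=p(2^m x)\to 0$ forces $p\equiv 0$, contradicting that $\Phi$ is an $N^*$-function. Hence the constant in the standing hypothesis $2\Phi(x)=\Phi(kx)$ of Section 4.2 satisfies $k>2$ strictly, so $\log_2 k>1$ and your bound $n\max_j\|g_j\|_\Phi\le k\,c^{\log_2 k}\,n^{1-\log_2 k}\to 0$ goes through. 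Do note that both your argument and the paper's lean on that Section 4.2 hypothesis (whose derivation of a single $k$ valid for all $x$ is itself delicate); avoiding it would require the sharper decay of $\|g\|_\Phi$ in terms of the modular that you allude to at the end, which is the only step still needing real work.
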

\begin{proof}
We argue by contradiction. Assume there is  $\varphi \in L_\Phi(X)^*$ with $\varphi \neq 0.$ Then $\varphi$ has image $\mathbb{R}$ (a non zero linear map to a one-dimensional space is surjective ),so there is some $f\in L_\Phi(X) $ such that $|\varphi(f)|\geq 1.$\\
When $(X,\mathcal{M},\mu)$ is non-atomic and $\Phi(f)\in L^1(X),$ Lemma \ref{lemma1} tells us $\Phi(|f|)d\mu$ is a finite non-atomic  measure on $X,$ and Lemma \ref{lemma2} then tells us there is an $A\in \mathcal{M}$ such that $\displaystyle \int_{A}\Phi(|f|)d\mu=\frac{1}{3}\int_X\Phi(|f|)d\mu.$ Then set $g_1=f \chi_A$ and $g_2=f \chi_{X-A}$ so $f=g_1+g_2$ and $\Phi(|f|)=\Phi(|g_1|)+\Phi(|g_2|).$ So 
\begin{equation*}
\displaystyle  \int_{X}\Phi(|g_1|)d\mu= \int_{A}\Phi(|f|)d\mu=\frac{1}{3} \int_{X}\Phi(|f|)d\mu.
\end{equation*}
Hence 
\begin{equation*}
\displaystyle  \int_{X}\Phi(|g_2|)d\mu= \frac{1}{3} \int_{X}\Phi(|f|)d\mu.
\end{equation*}
Since $\displaystyle|\varphi(|f|)|\geq 1, \ |\varphi(|g_i|)|\geq \displaystyle\frac{1}{2}  $ for some $i.$ Let  $f_1=2g_i,$ so $|\varphi(f_1)|\geq 1$ and
\begin{eqnarray*}
\displaystyle \int_{X}\Phi(|f_1|)d\mu=\int_{X}\Phi(|2g_1|)d\mu
&\leq&2\int_{X}\Phi(|g_1|)d\mu\\
&\leq&\frac{2}{3}\int_{X}\Phi(|f|)d\mu.
\end{eqnarray*}
 Iterate this to get a sequence $\{f_n\}$ in $L_\Phi (X)$ such that $|\varphi(f_n)|\geq 1$ and $d(f_n,0)=\displaystyle \int_X\Phi(|f_n|)d\mu\leq \displaystyle \big(\frac{2}{3}\big)^n\int_X\Phi(|f|)d\mu .$ Then $\lim\limits_{n\rightarrow +\infty}d(f_n,0)=0,$ a contradiction of continuity of $\varphi.$
\end{proof}

\begin{thm}
If the measure $\mu$  contains an atom with finite measure, then $L_\Phi^*(X)\neq 0.$
\end{thm}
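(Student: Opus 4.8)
The plan is to exhibit an explicit nonzero continuous linear functional on $L_\Phi(X)$ that is carried by the atom. Let $A\in\Lambda$ be an atom with $0<a:=\mu(A)<+\infty$. By property (2) of the sets of $\Lambda$, every measurable function $f$ coincides almost everywhere on $A$ with a constant, which I denote $\overline{f}_A$; the assignment $f\mapsto\overline{f}_A$ is plainly additive and homogeneous, since $\overline{(\alpha f+\beta g)}_A=\alpha\,\overline{f}_A+\beta\,\overline{g}_A$ almost everywhere on $A$. I would therefore define $U\colon L_\Phi(X)\to\mathbb{R}$ by $U(f)=\overline{f}_A$, so that all that remains is to verify that $U\not\equiv0$ and that $U$ is continuous.

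Nontriviality is immediate: $\chi_A\in L_\Phi(X)$ because $\int_X\Phi(\chi_A)\,d\mu=\Phi(1)\,a<+\infty$, and $U(\chi_A)=1\neq0$.

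For continuity I would work with the metric $d_\Phi$ (which, by Proposition \ref{prop2}, induces the same topology as the quasi-norm $\|\cdot\|_\Phi$). The key estimate is that, since $|f|=|\overline{f}_A|$ almost everywhere on $A$,
\begin{equation*}
\Phi(|\overline{f}_A|)\,a=\int_A\Phi(|f|)\,d\mu\le\int_X\Phi(|f|)\,d\mu=d_\Phi(f,0).
\end{equation*}
Applying the increasing inverse $\Phi^{-1}$ — which by Proposition \ref{phi-1} is an $N$-function, hence is defined on all of $[0,+\infty)$, is continuous, and satisfies $\Phi^{-1}(0)=0$ — this yields
\begin{equation*}
|\overline{f}_A|\le\Phi^{-1}\!\Big(\frac{d_\Phi(f,0)}{a}\Big).
\end{equation*}
Using this inequality with $f$ replaced by $f_n-f$, together with $\overline{(f_n-f)}_A=\overline{f_n}_A-\overline{f}_A$, one gets $|U(f_n)-U(f)|\le\Phi^{-1}\!\big(\frac{1}{a}\,d_\Phi(f_n-f,0)\big)$, which tends to $0$ whenever $d_\Phi(f_n,f)\to0$, by continuity of $\Phi^{-1}$ at the origin. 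Hence $U$ is a continuous linear functional, and since $U\neq0$ we conclude $L_\Phi^*(X)\neq0$.

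I do not expect a genuine obstacle here; the only points needing a little care are invoking the correct topology (metric versus quasi-norm, settled by Proposition \ref{prop2}) and using that $\Phi^{-1}$ is continuous at $0$ with $\Phi^{-1}(0)=0$, which comes from Proposition \ref{phi-1} and so requires no separate discussion of whether $\Phi$ is bounded. As a byproduct, normalizing $\|f\|_\Phi\le1$ in the estimate above gives $\|U\|\le\Phi^{-1}(1/a)$, while testing $U$ against $\Phi^{-1}(1/a)\,\chi_A$ gives the reverse inequality, so in fact $\|U\|=\Phi^{-1}(1/a)$, in agreement with the formula for linear functionals in Theorem \ref{thm4.2}.
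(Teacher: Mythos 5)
Your proposal is correct and takes essentially the same approach as the paper: the paper's proof also defines the functional as the almost-everywhere constant value of $f$ on the atom and simply asserts that ``the reader can check'' it is a non-zero continuous linear functional, a verification you have carried out explicitly via the estimate $\Phi(|\overline{f}_A|)\,\mu(A)\le d_\Phi(f,0)$. Nothing further is needed.
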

\begin{proof}
Let $B$ be an atom with finite measure. Any measurable function $f:X\rightarrow \mathbb{R}$ is constant almost everywhere on $B.$ Call the almost everywhere common value $\varphi(f).$ The reader can check $\varphi$ is a non-zero  continuous linear functional on $L_\Phi(X).$
\end{proof}

 \begin{rem}
   If $L^*_\Phi\neq 0,$  how to express the elements of $L^*_\Phi \backslash \{0\}?$\\
  If $A\in L^*_\Phi$ and any $f\in L_\Phi.$ By the density of $L^1,$ there exists sequence $(f_n)$ in $L^1$ such that $d_\Phi(f,f_n)\rightarrow 0,$ as $n\rightarrow +\infty.$ We have,  
    $$A(f)=A(\lim\limits_{n\rightarrow+\infty}f_n)=\lim\limits_{n\rightarrow+\infty}A(f_n),$$ 
by Lemma 2 in \cite{D}, we get  $A(f)=\displaystyle\lim\limits_{n\rightarrow+\infty}\int_Xf_nud\mu,$  where $u$ is a bounded function in $X.$ \\\\ 
   
   \end{rem}

\hspace{-0.3cm}\textbf{\small{Author contributions}} \small{The study was carried out in collaboration of all authors. All authors read and
approved the final manuscript.}\\

\hspace{-0.3cm}\textbf{\small{Funding}} \small{Not Applicable.}\\

\hspace{-0.3cm}\textbf{\large{Compliance with ethical standards}}\\

\hspace{-0.3cm}{\textbf{\small{Conflicts of interest}}  \small{The authors declare that they have no conflict of interest.}\\

\hspace{-0.3cm}{\textbf{\small{Ethical approval}}\small{ This article does not contain any studies with human participants or animals performed
by any of the authors.}

\bibliographystyle{plain}
%\bibliography{bibliot}

\end{document}